\documentclass{article}
\usepackage{amsmath}
\usepackage{amssymb}
\usepackage{amsthm}
\usepackage{amsxtra}
\usepackage{bbm}
\usepackage{ulsy}
\usepackage{accents}
\usepackage{enumerate}
\usepackage{esint}
\usepackage[ruled]{algorithm2e}
\usepackage{tikz}
\usepackage{multirow,rotating}
\usepackage[ruled]{algorithm2e}

\usepackage{url}
\usepackage[nocompress]{cite}

\usepackage{graphicx}

\usepackage{booktabs}
\usepackage{array}

\numberwithin{equation}{section}

\newtheorem{theorem}{Theorem}

\newtheorem{proposition}{Proposition}

\theoremstyle{remark}
\newtheorem{remark}{Remark}

\providecommand{\abs}[1]{\lvert#1\rvert}
\providecommand{\norm}[1]{\lVert#1\rVert}

%%%%%%%%%%%%%%%%%%%%%%%%%%%%%%%%%
%\theoremstyle{plain}
%\newtheorem{theorem}{Theorem}[section]
%\newtheorem{lemma}[theorem]{Lemma}
%\newtheorem{proposition}[theorem]{Proposition}
%\newtheorem{corollary}[theorem]{Corollary}

%\theoremstyle{definition}
%\newtheorem{definition}{Definition}[section]
%\newtheorem{assumption}{Assumption}[section]

%\theoremstyle{remark}
%\newtheorem{remark}{Remark}[section]
%%%%%%%%%%%%%%%%%%%%%%%%%%%%%%%%%

\newcommand{\bitem}{\begin{itemize}}
\newcommand{\eitem}{\end{itemize}}

\newcommand{\R}{\mathbb{R}}

\newcommand{\bpm}{\begin{pmatrix}}
\newcommand{\epm}{\end{pmatrix}}
\newcommand{\bsm}{\left(\begin{smallmatrix}}
\newcommand{\esm}{\end{smallmatrix}\right)}

\newcommand{\shrink}{\text{shrink}}

\DeclareMathOperator{\argmin}{arg min}

\DeclareMathOperator{\const}{const}

\DeclareMathOperator{\ddiv}{div}

\DeclareMathOperator{\sign}{sign}

\newcolumntype{M}{>{$\vcenter\bgroup\hbox\bgroup}c<{\egroup\egroup$}}

\usepackage[utf8x]{inputenc}

\usepackage{amsmath}
\usepackage{amssymb}
\usepackage{amsthm}
\usepackage{amsxtra}
\usepackage{bbm}
\usepackage{ulsy}
\usepackage{accents}
\usepackage{enumerate}
\usepackage{esint}
\usepackage[ruled]{algorithm2e}
\usepackage{tikz}
\usepackage{multirow,rotating}

\usepackage{url}
\usepackage{graphicx}
\usepackage[hang]{subfigure}
\usepackage{caption}
\usepackage{float}

\usepackage{booktabs}
\usepackage{array}

\newcolumntype{M}{>{$\vcenter\bgroup\hbox\bgroup}c<{\egroup\egroup$}}

\title{Convex Variational Image Restoration with Histogram Priors}
\author{Paul Swoboda and Christoph Schn\"orr}

\begin{document}

\maketitle

\begin{abstract}
We present a novel variational approach to image restoration (e.g., denoising,
inpainting, labeling) that enables to complement established variational
approaches with a histogram-based prior enforcing closeness of the solution to
some given empirical measure. By minimizing a single objective function, the
approach utilizes simultaneously two quite different sources of information for
restoration: spatial context in terms of some smoothness prior and non-spatial
statistics in terms of the novel prior utilizing the Wasserstein distance
between probability measures. We study the combination of the functional lifting
technique with two different relaxations of the histogram prior and derive a
jointly convex variational approach. Mathematical equivalence of both
relaxations is established and cases where optimality holds are discussed. Additionally, we
present an efficient algorithmic scheme for the numerical treatment of
the presented model. Experiments using the basic total-variation based denoising
approach as a case study demonstrate our novel regularization approach.
\end{abstract}

\section{Introduction} \label{sec:Introduction}
A broad range of powerful variational approaches to low-level image analysis
tasks exist, like image denoising, image inpainting or image labeling~\cite{Mathematical_CV_Handbook, LellmannMultiClass}. It is not
straightforward however to incorporate \emph{directly} into the restoration
process statistical prior knowledge about the image class at hand. Particularly,
handling global statistics as part of a single \emph{convex} variational approach has not been
considered so far.

In the present paper, we introduce a class of variational approaches of the form
\begin{equation} \label{eq:approach-general}
\inf_{u}  F(u) + \lambda R(u) + \nu W(\mu^{u},\mu^{0}),
\end{equation}
where $F(u) + \lambda R(u)$ is any energy functional consisting of a
data fidelity term $F(u)$ and a regularization term $R(u)$,
$W(\mu^{u},\mu^{0})$ denotes the histogram prior in terms of the Wasserstein
distance between the histogram corresponding to the minimizing function $u$ to
be determined and some given histogram $\mu^{0}$ and $\lambda>0$ and $\nu>0$ are parameters weighing the influence of each term. 
We require $R(u)$ to be convex. 
As a case study, we adopt for $R(u) = \text{TV}(u)$, the Total Variation, see~\cite{FunctionsOfBoundedVariation}, 
and $F(u) = \int_{\Omega} f(u(x),x) dx$, where $f$ can also be a nonconvex function. The
basic ROF denoising approach of~\cite{Rudin_Osher_Fatemi_1992} is included in
this approach with $f(u(x),x) = \left(u(x) - u_0(x)\right)^2$, where $u_0$ is the
image to be denoised.

Note that minimizing the second term $R(u)$ in~\eqref{eq:approach-general} entails
spatial regularization whereas the third Wasserstein term utilizes statistical
information that is not spatially indexed in any way. As an illustration, consider the academical
example in figure~\ref{fig:stripes}.
\begin{figure} 
\begin{center}
\includegraphics[width=0.7\textwidth]{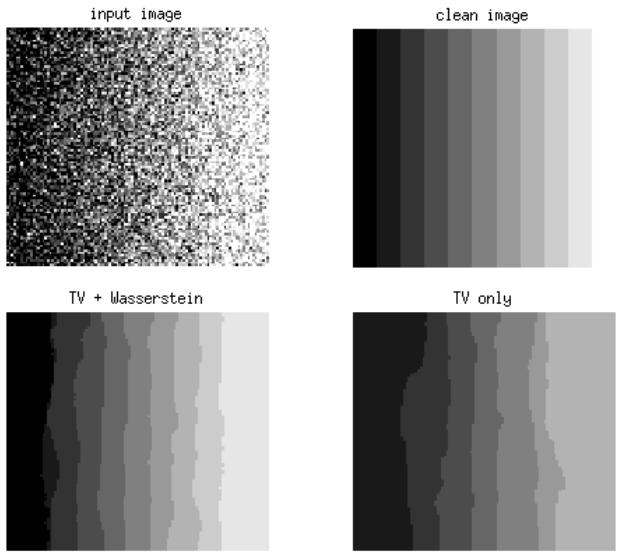}
\end{center}
 \caption{Denoising experiment of a noisy image (upper row, left side) taking
into account statistical prior information through convex optimization (lower
row, left side) infers the correct image structure and outperforms hand-tuned
established variational restoration (lower row, right side). Enforcing
global image statistics to be similar to those of the clean image (upper
row, right side) gives our approach an advantage over methods not taking such
information into account.}
 \label{fig:stripes}
\end{figure}
Knowing the grayvalue distribution of the original image helps us in
regularizing the noisy input image.
 We tackle the corresponding main difficulty in two
different, mathematically plausible ways: by convex relaxations of~\eqref{eq:approach-general} in order to obtain a computationally tractable
approach. Comparing these two relaxations -- one may be tighter than the other
one -- reveals however mathematical equivalence. Preliminary
numerical experiments demonstrate that the relaxation seems to be tight enough
so as to bias effectively variational restoration towards given statistical
prior information.

%Our paper is organized as follows. After briefly reviewing relevant work in
%Section~\ref{sec:PriorWork}, we state our approach and the related variational
%problem in Section~\ref{sec:ProblemSetting}. Sections~\ref{sec:SaddlePointFormulation} and~\ref{sec:HoeffdingFrechetBoundsFormulation}
%provide two apparently quite different relaxation approaches that enable to
%solve approximately the variational problem by convex optimization.  Equivalence
%of these two relaxations is established in Section~\ref{sec:RelationshipOfRelaxations}. We conclude with an efficient algorithm
%for computing a solution to the relaxation and show numerical examples in Section~\ref{sec:Experiments} that demonstrate basic properties of our approach.

\section{Prior Work, Contribution} \label{sec:PriorWork}
\subsection{Related Work}
Image regularization by variational methods is a powerful and commonly used tool
for denoising, inpainting, labeling and many other applications. As a case study
in connection with~\eqref{eq:approach-general}, we consider one of the most widely
used approaches for denoising, namely the Rudin, Osher and
Fatemi (ROF) model from~\cite{Rudin_Osher_Fatemi_1992}:
\begin{equation} \label{eq:ROF}
 \min_{u \in \text{BV}(\Omega,[0,1]} \norm{u-u_0}^2 + \lambda \text{TV}(u),
\end{equation}
where $u_0$ is the input image, $\text{TV}$ denotes the Total Variation and
$\text{BV}(\Omega,[0,1])$ is the space of functions of bounded variation with domain $\Omega \subset \R^d$ and values in $[0,1]$. 
The minimization problem~\eqref{eq:ROF} is convex and can be solved to a global optimum 
efficiently by various first-order proximal splitting algorithms even for large problem
sizes, e.g. by Primal-Dual methods~\cite{FastPrimalDualChambollePock} or other proximal minimization algorithms for nonsmooth convex optimization~\cite{ADMM-2010, DouglasRachfordEckstein, raguet-gfb}.

We can also use more general data terms instead of the quadratic term in~\eqref{eq:ROF}. For example in~\cite{GlobalSolutionsOfVariationalModels} it is shown
how the data term can be replaced by a continuous but possibly non-convex function 
$\int_\Omega f(u(x),x) dx$. Still this data function is local and does not take
into account global statistics.

In the case that some prior knowledge is encoded as a histogram, the Wasserstein
distance and the associated Optimal Transport are a suitable choice for
penalizing deviance from prior knowledge. More generally the
Wasserstein distance can be used as a distance on histograms over arbitrary
metricized spaces.

Regarding the Wasserstein distance and the theory of Optimal Transport we refer to the in-depth treatise~\cite{VillaniOptimalTransport}.
Optimal Transport is well-known as Earth Mover's distance in image processing and  computer vision~\cite{EMDRubner} and has been used for content-based image retrieval. 
Further recent applications include~\cite{Chan_histogrambased, peyre-wasserstein-rbac} in connection with segmentation and~\cite{ferradans-mixing-preprint} for texture synthesis.

The authors of~\cite{WassersteinRegularizationImagingRabinPeyre} propose an approach to contrast and color modification.
Given a prior model of how the color or grayvalues are distributed in an image, the authors propose a variational formulation for modifying the given image so that these statistical constraints are met in a spatially regular way.
While their algorithm is fast, high runtime performance is achieved by minimizing a non-convex approximation of their original energy. 
In contrast, we directly minimize a convex relaxation of the original energy, hence we may hope to obtain lower energies and not to get stuck in local minima.

Our variational approach employing the Wasserstein distance as a histogram-based prior through \emph{convex} relaxation appears to be novel.

\subsection{Contribution}
We present
\begin{itemize}
 \item a variational model with a histogram-based prior for image restoration (Section~\ref{sec:ProblemSetting}),
 \item two convex relaxations of the original problem together with discussions of cases where optimality holds (Sections~\ref{sec:SaddlePointFormulation} and~\ref{sec:HoeffdingFrechetBoundsFormulation}),
 \item a proof of equivalence for the two presented relaxations (Section~\ref{sec:RelationshipOfRelaxations}),
 \item an efficient numerical implementation of the proposed variational model (Section~\ref{sec:Numerics}),
 \item experimental validation of the proposed approach (Section~\ref{sec:Experiments}).
\end{itemize}

\section{Problem and Mathematical Background} \label{sec:ProblemSetting}
We introduce the original non-convex model, consider different ways to write
the Wasserstein distance and introduce the functional lifting technique for
rewriting the resulting optimization problem to show well-posedness and to make it amenable for global optimization.
\subsection{Problem Statement}
For an image domain $\Omega \subset \R^2$, e.g. $\Omega = [0,1]^2$ and $u:
\Omega \rightarrow [0,1]$, consider the normalized pushforward $\mathcal{L}_{|\Omega}$ of the Lebesgue measure $\mathcal{L}$ restricted to $\Omega$ by $u$:
\begin{equation} \label{eq:Grey-valueHistogram}
 \mu^{u}(A) = \frac{1}{\mathcal{L}(\Omega)} \left(u_{*} \mathcal{L}\right)(A) = \frac{1}{\mathcal{L}(\Omega)} \mathcal{L}(u^{-1}(A)) \quad \forall A \subset [0,1] \text{ measurable}\,.
\end{equation}
We will use the notation $\abs{B} := \mathcal{L}(B)$ for simplicity. In other words, $\mu^u$ is the grayvalue histogram of the image $u$.
We would like to minimize the energy function
\begin{equation} \label{eq:energy}
\min_{u \in \text{BV}(\Omega, [0,1])} E(u) = \int_{\Omega}
f(u(x),x)  dx + \lambda \text{TV}(u) + \nu W(\mu^{u},\mu^0).
\end{equation}
$\text{TV}(u)$ is the Total Variation
\begin{equation} \label{eq:TV}
 \text{TV}(u) = \sup\left\{\int_\Omega u(x) \ddiv \phi(x) dx\ \colon\ \phi \in
C_c^1(\Omega,\R^2), \norm{\phi}_{\infty} \leq 1\right\}\,,
\end{equation}
where $C_c^1(\Omega,\R^2)$ is the space of continuously differentiable functions with compact support in $\Omega$ and values in $\R^2$,
see~\cite{FunctionsOfBoundedVariation} for more details.
$f: [0,1] \times \Omega \rightarrow \R$ is a continuous fidelity function, and $W$ is the Wasserstein distance
\begin{equation} \label{eq:WassersteinDistance}
 W(\mu,\tilde{\mu}) = \inf_{\pi \in \Pi(\mu,\tilde{\mu})} \int_{[0,1] \times
[0,1]} c(\gamma_1,\gamma_2)\ d\pi(\gamma_1,\gamma_2).
\end{equation}
$c: [0,1] \times [0,1] \rightarrow \R$ is the cost
function for the Wasserstein distance, for example $c(\gamma_1,\gamma_2) =
\abs{\gamma_1-\gamma_2}^p$ with $p\geq 1$. The space of transport plans is 
\begin{equation}
\Pi(\mu,\tilde{\mu})  = \{\pi \in \mathcal{P}([0,1] \times
[0,1]) \ \colon \ 
\begin{array}{l}
\pi( A \times [0,1] ) = \mu(A) \\ 
\pi( [0,1] \times B ) = \tilde{\mu}(B)
\end{array}
\quad \forall A,B \text{ measurable}\},
\end{equation}
where $\mathcal{P}([0,1] \times [0,1])$ is the space of all probability measures
defined on the Borel-$\sigma$-Algebra over $[0,1] \times [0,1]$.
If $c$ is lower semicontinuous and there exist upper semicontinuous functions $a,b\in L_1([0,1])$ such that $c(\gamma_1,\gamma_2) \geq a(\gamma_1) + b(\gamma_2)$, then by Theorem~4.1 in~\cite{VillaniOptimalTransport} there exists a measure
which minimizes~\eqref{eq:WassersteinDistance} and which is called the optimal
transport plan. The optimization problem~\eqref{eq:WassersteinDistance} is linear in both constraints and objective and therefore convex.
Note however that energy~\eqref{eq:energy} is not convex. 

By minimizing~\eqref{eq:energy} we obtain a solution $u$ which remains
faithful to the data by the fidelity term $f$, is spatially coherent by
the Total Variation term and has global grayvalue statistics
similar to $\mu^0$ by the Wasserstein term.

\begin{remark}
In the case of two labels, which means restricting the function $u$ in~\eqref{eq:energy} to have values $u(x) \in \{0,1\}$, our model reduces to foreground/background segmentation and the Wasserstein term can be interpreted as a prior favoring a prespecified size of the foreground area.
\end{remark}

\subsection{The Wasserstein Distance and its Dual}
\label{sec:WassersteinDistance}
We reformulate energy~\eqref{eq:energy}  by introducing another way to obtain the Wasserstein distance.
Assume the cost $c: [0,1] \times [0,1] \rightarrow \R$ is lower semicontinuous such that
\begin{equation}
c(\gamma_1,\gamma_2) \geq a(\gamma_1) + b(\gamma_2) \quad \forall x,y \in [0,1]
\end{equation}
for $a,b \in L^1([0,1])$ upper semicontinuous.

Recall Theorem 5.10 in~\cite{VillaniOptimalTransport}, which
states that the following dual Kantorovich formulation equals the Wasserstein distance:
\begin{equation} \label{eq:DualKantorovich}
W(\mu^u,\mu^0) = 
  \sup_{\substack{(\psi,\psi') \in L_1([0,1])^2\\ \psi(\gamma_1) - \psi'(\gamma_2) \le c(\gamma_1,\gamma_2) }} 
\int_0^1 \psi d \mu^u  - \int_0^1 \psi' d\mu^0.
\end{equation}
Define therefore
\begin{equation} \label{eq:E}
 E(u,\psi,\psi') = \int_{\Omega} f(u(x),x) dx + \lambda \text{TV}(u)
 + \nu \left(\int_0^1 \psi d\mu^u - \int_0^1 \psi' d\mu^0 \right)
\end{equation}
and let 
\begin{equation} 
C = \text{BV}(\Omega,[0,1])
\end{equation}
be the space of functions of bounded variation with domain $\Omega$ and range
$[0,1]$ and
\begin{equation}
D = \left\{\psi, \psi' : [0,1] \rightarrow \R \text{ s.t. } 
\begin{array}{l}
\psi(\gamma_1) - \psi'(\gamma_2) \le c(\gamma_1,\gamma_2)  \ \forall \gamma_1,\gamma_2 \in [0,1] \\ 
\ \psi, \psi' \in L_1([0,1]) 
\end{array} \right\}.
\label{eq:DualKantorovichSet}
\end{equation}
It follows from~\eqref{eq:DualKantorovich} with the above definitions that
\begin{equation} \label{eq:minmax}
\inf_{u \in C} E(u) = \inf_{u \in C} \sup_{(\psi, \psi') \in D} E(u,\psi,\psi')
\end{equation}

\subsection{Functional Lifting} \label{sec:FunctionalLifting}
While the Wasserstein distance~\eqref{eq:WassersteinDistance} is
convex in both of its arguments, see Theorem 4.8 in~\cite{VillaniOptimalTransport}, the energy in~\eqref{eq:energy} is not convex due to the
nonconvex transformation $u \mapsto \mu^u$ in the first argument of the
Wasserstein term and the possible nonconvexity of $f$. 
To overcome the nonconvexity of both the data term and the transformation
in the first argument of the Wasserstein distance we lift the function $u$.
Instead of $u$ we consider a function $\phi$ defined below whose domain is one
dimension larger. This extra dimension represents the range of $u$ and allows
us both to linearize the fidelity term and to convexify the Wasserstein
distance.
This technique, known as functional lifting or the calibration method, was 
introduced in~\cite{CalibrationAlbertiBouchitteDalMaso} and is commonly used in
many optimization problems.

Let
\begin{equation} \label{eq:liftingsetnonconvex}
 C' = \left\{\phi \in \text{BV}\left(\Omega \times \R,\{0,1\}\right) \ \colon \ 
\begin{array}{c}
\phi(\cdot,(-\infty,0]) \equiv 1,\  \phi(\cdot,[1,\infty)) \equiv 0,\\ D_{\gamma} \phi(\cdot,\gamma) \le 0\ \end{array}\right\}\,.
\end{equation}
Every function $u \in C$ corresponds uniquely to a function $\phi \in C'$ via
the relation
\begin{equation} \label{eq:FunctionalLiftingRelation}
-D_{\gamma} \phi = \mathcal{H}^{2}\llcorner \text{graph}(u)\,,
% u(x) = \gamma \quad \Leftrightarrow \quad - D_{\gamma} \phi(x,\cdot) = \delta_{\gamma}\,.
\end{equation}
where $ \mathcal{H}^{2}\llcorner \text{graph}(u)$ is the restriction of the $2$-dimensional Hausdorff measure to the graph of $u$.
Also for such a pair $(u,\phi)$ and for all measurable sets $A \subset [0,1]$ we have the relation
\begin{equation}
\mu^u(A) = \mu^{\phi}(A) = 
\frac{1}{\abs{\Omega}} \int_{\Omega} \abs{D_{\gamma} \phi(x,A)} dx 
= \frac{1}{\abs{\Omega}} \int_{\Omega} -D_{\gamma} \phi(x,A) dx\,.
\end{equation}
Note that in contrast to $u \mapsto \mu^u$, the transformation $\phi \mapsto \mu^{\phi}$ is linear.

Consider the energy
\begin{equation} \label{eq:functionallifting}
E'(\phi,\psi,\psi') = \begin{array} {l}
- \int_{\Omega} \int_{0}^{1} f(\gamma,x) D_{\gamma} \phi(x,\gamma)\,dx 
+ \lambda \int_0^1 \text{TV}( \phi(\cdot,\gamma) ) d\gamma \\
+ \nu \left(\int_0^1 \psi d\mu^{\phi} - \int_0^1 \psi' d\mu^0 \right)\,.
\end{array}
\end{equation}
For a pair $(u,\phi)$ as in~\eqref{eq:FunctionalLiftingRelation} the identity
\begin{equation}
E(u,\psi,\psi') = E'(\phi,\psi,\psi')
\end{equation}
holds true by the coarea formula, see~\cite{FunctionsOfBoundedVariation}. 
Consequently, we have
\begin{equation} \label{eq:LiftedMinimizationProblem}
 \inf_{u \in C} \sup_{(\psi,\psi') \in D} E(u,\psi,\psi') =
 \inf_{\phi \in C'} \sup_{(\psi,\psi') \in D} E'(\phi,\psi,\psi')\,.
\end{equation}
Note that $E'$ is convex in $\phi$ and concave in $(\psi,\psi')$, hence is easier to handle from an optimization point of view.

%%%%%%%%%%%%%%%%%%
% Well-posedness %
%%%%%%%%%%%%%%%%%%
%Let $p=(p^x,p^{\gamma})$ and for $\gamma \in [0,1]$ let
%\begin{equation}
% h(x,\gamma,p) = \left\{ \begin{array}{ll}
%\abs{p^{\gamma}} (f(x,\gamma) + \frac{\abs{p^x}}{\abs{p^\gamma}}) &, p^\gamma < 0 \\
%(f(x,\gamma) + \abs{p^x})^{\infty} &, p^\gamma = 0\\
%\infty &, p^\gamma \geq 0
%\end{array}\right.\,.
%\end{equation}
%where $(f(x,\gamma) + \abs{p^x})^{\infty} = \lim_{\lambda \rightarrow \infty} (f(x,\gamma) + \abs{\lambda p^x})/\lambda = \abs{p^x}$ is the recession function. 
%Extend $h$ continuously on $\gamma \in [-\epsilon,0] \cup [1,1+\epsilon]$.
\begin{theorem}
Let $\Omega \subset \R^2$ be bounded, let $f(x,\gamma)$ be continuous and let the cost $c$ of the Wasserstein distance fulfill the conditions from Section~\ref{sec:WassersteinDistance}.
Then there exists a minimizer $\overline{\phi}$ of $\inf_{\phi \in C'} \sup_{(\psi,\psi') \in D} E'(\phi,\psi,\psi')$.
%$\tilde{u}(x)=\int_0^1 \tilde{\phi}(y,x) dy$ is a minimizer of $\inf_{u \in C} \max_{(\psi,\psi') \in D} E(u,\psi,\psi')$.
\end{theorem}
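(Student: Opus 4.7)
The plan is to apply the direct method of the calculus of variations to the value function
\[
G(\phi) := \sup_{(\psi,\psi') \in D} E'(\phi,\psi,\psi') = -\int_{\Omega}\!\int_0^1 f(\gamma,x)\,D_\gamma \phi(x,\gamma)\,dx + \lambda \int_0^1 \TV(\phi(\cdot,\gamma))\,d\gamma + \nu\, W(\mu^\phi,\mu^0),
\]
where the second equality is the Kantorovich duality~\eqref{eq:DualKantorovich}. Thus it suffices to produce a minimizer of $G$ on $C'$.

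First I check that $\inf_{C'} G$ is finite. The data term is bounded in absolute value by $\|f\|_\infty |\Omega|$, since $-D_\gamma \phi$ has total mass $|\Omega|$ on $\Omega \times [0,1]$; the sliced TV is nonnegative; and the Wasserstein term is bounded below by $\int a\,d\mu^\phi + \int b\,d\mu^0$, finite under the standing hypothesis $c \geq a+b$ with $a,b \in L^1$. A trivial competitor (e.g.\ the $\phi$ corresponding to a constant $u$) has finite energy. Pick a minimizing sequence $\phi_n \in C'$. Since each $\phi_n$ is monotone in $\gamma$ with prescribed boundary values, $\int_\Omega |D_\gamma \phi_n|\,dx = |\Omega|$, while the slice bound $\int_0^1 \TV(\phi_n(\cdot,\gamma))\,d\gamma$ is controlled by the energy. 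Hence $\phi_n$ is uniformly bounded in $\text{BV}(\Omega \times [0,1])$, and the compact embedding $\text{BV} \hookrightarrow L^1$~\cite{FunctionsOfBoundedVariation} supplies a subsequence converging in $L^1$ and pointwise a.e.\ to some $\phi$.

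Next I verify $\phi \in C'$ and lower semicontinuity of $G$. Membership is a closedness check: the $\{0,1\}$-range is preserved by a.e.\ convergence, while monotonicity $D_\gamma \phi \leq 0$ and the boundary conditions at $\gamma \leq 0$ and $\gamma \geq 1$ are preserved by the weak-$\ast$ convergence of the gradient measures. The sliced TV term is lsc by Fatou's lemma combined with the $L^1$-lsc of $\TV$ on almost every slice. The data term converges because $D_\gamma \phi_n \rightharpoonup D_\gamma \phi$ weak-$\ast$ in the sense of measures on $\Omega \times [0,1]$ (a consequence of the BV bound) and $f$ is continuous and bounded on $\overline{\Omega} \times [0,1]$. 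For the Wasserstein term, the linear map $\phi \mapsto \mu^\phi$ sends this weak-$\ast$ convergence to weak convergence $\mu^{\phi_n} \rightharpoonup \mu^\phi$ in $\mathcal{P}([0,1])$, and $W(\cdot,\mu^0)$ is lsc with respect to weak convergence of probability measures, a standard consequence of the Kantorovich duality~\cite{VillaniOptimalTransport}. Together these give $G(\phi) \leq \liminf_n G(\phi_n) = \inf_{C'} G$, so $\phi$ is the desired minimizer.

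The main obstacle I expect is the bookkeeping around the product domain: one must show simultaneously that the $L^1$ limit inherits all three product-structural constraints defining $C'$ (binary values, monotonicity in $\gamma$, fixed values outside the slab $\gamma \in (0,1)$) and that the two rather different variational quantities appearing in $G$ — the sliced spatial total variation and the $\gamma$-marginal $-D_\gamma \phi$ — behave correctly under the convergence extracted from a single uniform BV bound on $\Omega \times [0,1]$. Each ingredient is classical in isolation, but threading them through the constraint set $C'$ and the mixed structure of $G$ is where care is required.
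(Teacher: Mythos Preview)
Your argument is correct and follows the same direct-method template as the paper: compactness of (minimizing sequences in) $C'$ via a BV bound, together with lower semicontinuity of each of the three terms under weak-$\ast$ BV convergence, the Wasserstein term being handled in the paper by writing it as a supremum of continuous linear functionals in $D_\gamma\phi$. If anything you are slightly more careful than the paper, which asserts precompactness of all of $C'$ outright rather than deriving the needed BV bound on the minimizing sequence from the energy (via the sliced TV term) as you do.
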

\begin{proof}
We first show that the set $C'$ is compact in the $\text{weak}^*$ topology in BV. By theorem~3.23 in~\cite{FunctionsOfBoundedVariation}, $C'$ is precompact. It then remains to prove that $C'$ is closed in the $\text{weak}^*$-topology. Thus let $(\phi_n)$ in $C'$ converge $\text{weakly}^*$ to $\phi$, which means that $(\phi_n)$ converges strongly in $L^1_{loc}$ and $D_{\gamma} \phi_n$ converges $\text{weakly}^*$.
$D_{\gamma} \phi_n(\cdot,\gamma) \leq 0$ means 
\begin{equation}
\int_{\Omega \times \R} w D_{\gamma} \phi_n \geq 0 \quad  \forall w \in C_c(\Omega \times \R)\,.
\end{equation}
This property is preserved under $\text{weak}^*$-convergence by definition.
$\phi(x,\gamma) \in \{0,1\}$ a.e. as convergence in $L^1$ implies pointwise convergence of some subsequence.
Obviously $\phi_n(\cdot,(\-\infty,0]) \equiv 1$ and $\phi_n(\cdot,[1,\infty)) \equiv 0$ are naturally preserved in the limit.

The first term in the energy~\eqref{eq:functionallifting} is lower semicontinuous by assumption. The TV-term is lower-semicontinuous by Theorem~5.2 in~\cite{FunctionsOfBoundedVariation}.

The Wasserstein term in~\eqref{eq:functionallifting} has the form
$\sup_{\{(\psi,\psi') \in D\}} \int_0^1 \psi\,d \mu^{\phi}  - \int_0^1 \psi'\,d\mu^0$ and can thus be written as
\begin{equation}
\sup_{\{(\psi,\psi') \in D, \psi,\psi' \in C_c([0,1]) \}} -\frac{1}{\abs{\Omega}} \int_0^1 \int_{\Omega} \psi(\gamma) dx\,D_{\gamma} \phi(x,\gamma) - \int_0^1 \psi'(\gamma)\,d\mu^0(\gamma),
\end{equation}
where $C_c([0,1])$ is the space of all continuous functions in $[0,1]$. 
Hence it is a supremum of linear functionals and lsc as well.

As a supremum of positive sums of lsc terms, $\sup_{(\psi,\psi') \in D\cap C_c([0,1])^2} E'(\cdot,\psi,\psi')$ is lsc as well.
A minimizing sequence therefore has a $\text{weakly}^*$-convergent subsequence due to compactness of $C'$. The limit is a minimizer by the lower semicontinuity of the energy.
\end{proof}

As we have shown above, the proposed lifted model is well-posed, which means that the minimizer is attained under mild technical conditions. Then by~\eqref{eq:LiftedMinimizationProblem} also the original energy is well-posed.

\begin{remark}
We have considered a spatially continuous formulation, as
discretizations thereof suffer less from grid bias~\cite{DiscreteAndContinuousModelsForPartitioning, ComparisonDiscreteContinuous} than purely discrete formulations. Thus, proving existence of a solution of the spatially continuous model substantiates our approach from a modelling point of view.
\end{remark}

\begin{remark}
As discussed in Section~\ref{sec:Introduction}, we merely consider total variation based regularization as a case study, but this restriction is not necessary. 
More general regularizers can be used as well as long as they are convex and all the statements still hold, e.g. quadrativ or Huber functions, see~\cite{ConvexFormulationContinuousMultiLabel}.
In the present paper however, we rather focus on the novel prior based on the Wasserstein distance.
\end{remark}

\section{Relaxation as a Convex/Concave Saddle Point Problem}
\label{sec:SaddlePointFormulation}
Optimizing energy~\eqref{eq:energy} is not tractable, as it is a nonconvex
problem. Also solving~\eqref{eq:LiftedMinimizationProblem} is not tractable, as
the set $C'$ is nonconvex. The latter can be overcome by
considering the convex hull of $C'$, which 
leads to a relaxation as a convex/concave saddle point problem of the
minimization problem~\eqref{eq:energy}, which is solvable computationally.
\begin{proposition} \label{prop:SaddlePoint}
 Let 
\begin{equation} \label{eq:liftingset}
 C'' = \{\phi \in BV(\Omega \times \R, [0,1]) \ \colon \ 
\phi(\cdot,(-\infty,0]) \equiv 1,\  \phi(\cdot,[1,\infty)) \equiv 0,\  D_{\gamma} \phi \le 0\}
\end{equation}
Then $E'$ is convex/concave and
\begin{equation} \label{eq:minmaxrelax2}
 \min_{u \in C} E(u) \ge \min_{\phi \in C''} \sup_{(\psi, \psi')
\in D} E'(\phi,\psi,\psi').
\end{equation}
If 
\begin{equation} \label{eq:minmaxequal}
 \min_{u \in C} \max_{(\psi, \psi') \in D} E(u,\psi,\psi') = \max_{(\psi,
\psi') \in D} \min_{u \in C} E(u,\psi,\psi')
\end{equation}
holds, then the above relaxation is exact.
\end{proposition}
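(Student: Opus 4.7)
The plan splits the claim into three sub-parts. First, for the convex/concave structure, the sets $C''$ and $D$ are convex as intersections of half-spaces, and inspecting~\eqref{eq:functionallifting} shows that the fidelity term is linear in $\phi$, the slicewise $\text{TV}$ is convex in $\phi$ as a supremum of linear functionals, and the Wasserstein term, rewritten via $\mu^\phi(A) = -|\Omega|^{-1}\int_\Omega D_\gamma\phi(x,A)\,dx$, is bilinear in $\phi$ and $(\psi,\psi')$; together $E'$ is convex in $\phi$ and affine (hence concave) in $(\psi,\psi')$. Second, for the inequality~\eqref{eq:minmaxrelax2}, the inclusion $C' \subset C''$ is immediate since only $\phi \in \{0,1\}$ is relaxed to $\phi \in [0,1]$, so chaining~\eqref{eq:minmax} and~\eqref{eq:LiftedMinimizationProblem} yields $\min_{u\in C} E(u) = \min_{\phi\in C'}\sup_{D} E'(\phi,\psi,\psi') \ge \min_{\phi\in C''}\sup_{D} E'(\phi,\psi,\psi')$.

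Third, for exactness under~\eqref{eq:minmaxequal}, I would run the chain
\begin{align*}
\min_{u\in C} E(u) &= \min_{u\in C} \sup_{D} E(u,\psi,\psi') = \sup_{D} \min_{u\in C} E(u,\psi,\psi') \\
&= \sup_{D} \min_{\phi\in C''} E'(\phi,\psi,\psi') \le \min_{\phi\in C''}\sup_{D} E'(\phi,\psi,\psi'),
\end{align*}
invoking~\eqref{eq:minmax}, the hypothesis~\eqref{eq:minmaxequal}, a slicewise lifting identity, and the general minimax inequality; combined with part two, this forces equality throughout. The slicewise identity $\min_{u\in C} E(u,\psi,\psi') = \min_{\phi\in C''} E'(\phi,\psi,\psi')$ for each fixed $(\psi,\psi')\in D$ is obtained by absorbing the linear $\psi$-term into the fidelity via $\tilde f(\gamma,x) := f(\gamma,x) + \nu\psi(\gamma)/|\Omega|$, which reduces the subproblem (up to a $\psi'$-dependent constant) to a general-fidelity $\text{TV}$ minimization; the three infima $\min_C$, $\min_{C'}$ and $\min_{C''}$ then coincide by the coarea formula together with the standard thresholding argument from~\cite{GlobalSolutionsOfVariationalModels}.

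The main obstacle is applying the thresholding step to the parameterized integrand $\tilde f$, since $\psi \in L^1([0,1])$ need not be continuous in $\gamma$ and the cited thresholding results are usually stated for continuous data; I would handle this by density of continuous functions in $L^1([0,1])$ and pass to the limit using lower semicontinuity of the two infima in $\psi$. A secondary technical point is that pairs in $D$ are not a priori uniformly bounded, which can be addressed by restricting to $c$-conjugate pairs that, by the theory in~\cite{VillaniOptimalTransport}, are uniformly bounded on $[0,1]$ and preserve the value of $\sup_{D}$.
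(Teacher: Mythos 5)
Your proposal is correct, and for the exactness part it follows essentially the paper's own route: both arguments hinge on the slicewise lifting identity $\min_{u\in C}E(u,\psi,\psi')=\min_{\phi\in C''}E'(\phi,\psi,\psi')$ for fixed $(\psi,\psi')\in D$ (equation~\eqref{eq:liftingexactness}, which the paper simply cites from~\cite{GlobalSolutionsOfVariationalModels}), combined with hypothesis~\eqref{eq:minmaxequal} and the weak minimax inequality. Where you genuinely diverge is in establishing~\eqref{eq:minmaxrelax2}: you get it directly from the inclusion $C'\subset C''$ together with~\eqref{eq:minmax} and~\eqref{eq:LiftedMinimizationProblem}, whereas the paper derives the chain $\min_{u\in C}E(u)\ge\sup_{D}\min_{\phi\in C''}E'(\phi,\psi,\psi')$ and reads~\eqref{eq:minmaxrelax2} off from it. Your route is preferable here: since $\sup_{D}\min_{C''}E'\le\min_{C''}\sup_{D}E'$, the paper's chain by itself only bounds $\min_{C}E$ from below by the \emph{smaller} of the two quantities, and closing the gap would require an additional minimax equality for the relaxed convex/concave problem; the inclusion argument sidesteps this entirely. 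Your extra care with the slicewise identity --- absorbing $\nu\psi/\abs{\Omega}$ into the data term and noting that the thresholding results of~\cite{GlobalSolutionsOfVariationalModels} assume continuous integrands while $D$ only requires $\psi\in L_1([0,1])$ --- addresses a point the paper glosses over, and your proposed fix (restricting to $c$-conjugate, hence bounded and regular, potential pairs, which by~\cite{VillaniOptimalTransport} leaves the supremum unchanged) is the standard and adequate remedy.
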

\begin{proof}
 Note that $C''$ is a convex set, in particular it is the convex hull of $C'$.
$E'$ is also convex in $\phi$,
therefore the right side of~\eqref{eq:minmaxrelax2} is a convex/concave saddle
point problem.
For fixed $(\psi, \psi')$ we have the following equality:
\begin{equation} \label{eq:liftingexactness}
 \min_{u \in C} E(u,\psi,\psi') = \min_{\phi \in C''} E'(\phi,\psi,\psi'),
\end{equation}
which is proved in~\cite{GlobalSolutionsOfVariationalModels}.
Thus
\begin{equation}
\begin{array}{rcl}
 \min_{u \in C} E(u) &=& \min_{u \in C} \sup_{(\psi, \psi') \in D}
E(u,\psi,\psi') \\
 & \overset{(*)}{\ge} & \sup_{(\psi, \psi') \in D} \min_{u \in C} 
E(u,\psi,\psi') \\
 & \overset{(**)}{=} & \sup_{(\psi, \psi') \in D} \min_{\phi \in C''} 
E'(\phi,\psi,\psi'),
\end{array}
\end{equation}
where $(*)$  is always fulfilled for minimax problems and $(**)$ is
a consequence of~\eqref{eq:liftingexactness}. This proves
\eqref{eq:minmaxrelax2}. If~\eqref{eq:minmaxequal} holds, then $(*)$ above is
actually an equality and the relaxation is exact.
\end{proof}

%\begin{rem}
%The relaxation presented above will provide us with a model for
%numerically obtaining solutions in section~\ref{sec:Experiments} for the model
%\eqref{eq:energy}.
%This relaxation technique also works with histograms of dimension $> 1$, see
%\cite{VectorValuedLiftingBrown} for lifting techniques for vector valued
%functions, but the exactness of the functional lifting as done in
%\eqref{eq:functionallifting} may not hold any more. Also it is computationally
%more expensive.
%\end{rem}

\section{Relaxation with Hoeffding-Fr\'{e}chet Bounds}
\label{sec:HoeffdingFrechetBoundsFormulation}
A second relaxation can be constructed by using the primal formulation~\eqref{eq:WassersteinDistance} of the  Wasserstein distance and
enforcing the marginals of the distribution function of the transport plan to
be $\mu^\phi$ and $\mu^0$ by the Hoeffding-Fr\'{e}chet bounds: 
\begin{theorem}[{\cite[Thm.~3.1.1]{RachevRueschendorfMassTransportationI}}]\
 Let $F_1, F_2$ be two real distribution functions (d.f.s) and $F$ a d.f. on
$\R^2$. Then $F$ has marginals $F_1, F_2$, if and only if
\begin{equation} \label{eq:HoeffdingFrechetBounds}
 (F_1(\gamma_1) + F_2(\gamma_2) - 1)_{+} \le F(\gamma_1,\gamma_2) \le \min\{F_1(\gamma_1), F_2(\gamma_2)\}
\end{equation}
\end{theorem}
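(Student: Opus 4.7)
My plan is to prove the equivalence by splitting into the two directions, both of which reduce to elementary monotonicity and limit arguments for joint distribution functions; no machinery beyond the definition of a d.f. is needed.

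For the forward direction, assume $F$ has marginals $F_1,F_2$, and let $(X,Y)$ be a random vector on some probability space with joint d.f. $F$. The upper bound will follow from monotonicity of probability: $F(\gamma_1,\gamma_2) = \PP(X \le \gamma_1, Y \le \gamma_2) \le \PP(X \le \gamma_1) = F_1(\gamma_1)$, and symmetrically $F(\gamma_1,\gamma_2) \le F_2(\gamma_2)$. For the lower bound I would apply inclusion--exclusion to the union $\{X \le \gamma_1\} \cup \{Y \le \gamma_2\}$: since its probability is at most one, $F_1(\gamma_1) + F_2(\gamma_2) - F(\gamma_1,\gamma_2) \le 1$, giving $F(\gamma_1,\gamma_2) \ge F_1(\gamma_1) + F_2(\gamma_2) - 1$. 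Combined with the trivial $F \ge 0$, this yields $F(\gamma_1,\gamma_2) \ge (F_1(\gamma_1) + F_2(\gamma_2) - 1)_+$.

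For the reverse direction, assume the bounds hold and $F$ is a d.f.\ on $\R^2$. The plan is to extract the marginals of $F$ by sending one argument to $+\infty$ and invoking a squeeze. Since $F_2(\gamma_2) \to 1$ as $\gamma_2 \to \infty$, the upper bound gives $\limsup_{\gamma_2 \to \infty} F(\gamma_1,\gamma_2) \le F_1(\gamma_1)$, while the lower bound gives $\liminf_{\gamma_2 \to \infty} F(\gamma_1,\gamma_2) \ge F_1(\gamma_1) + 1 - 1 = F_1(\gamma_1)$. Hence the first marginal of $F$ equals $F_1$, and the symmetric argument (sending $\gamma_1 \to \infty$) gives the second marginal equal to $F_2$.

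The statement is elementary once the two directions are separated, and there is essentially no obstacle: the forward direction is pure probability bookkeeping (monotonicity plus Bonferroni) and the reverse direction is a squeeze against the marginal identities. The only point deserving care is to invoke the hypothesis that $F$ is a genuine two-dimensional distribution function (monotone in each argument, right-continuous, tending to $1$ at $(+\infty,+\infty)$ and to $0$ whenever either argument goes to $-\infty$), since this regularity is exactly what guarantees that the one-sided limits $\lim_{\gamma_2 \to \infty} F(\gamma_1,\gamma_2)$ and $\lim_{\gamma_1 \to \infty} F(\gamma_1,\gamma_2)$ exist and deserve to be called marginals.
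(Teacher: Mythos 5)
Your proof is correct. The paper does not prove this statement at all---it imports it verbatim as Theorem~3.1.1 of Rachev and R\"uschendorf---so there is no in-paper argument to compare against; your two directions (monotonicity plus the Bonferroni inequality for the forward implication, and the squeeze $F_1(\gamma_1)+F_2(\gamma_2)-1 \le F(\gamma_1,\gamma_2)\le F_1(\gamma_1)$ as $\gamma_2\to\infty$ for the converse) constitute a complete, self-contained elementary proof of the cited result. You are also right to flag the one point of care: the converse needs $F$ to be a genuine two-dimensional d.f.\ so that the limits defining its marginals exist, which is exactly the hypothesis the theorem supplies.
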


By~\eqref{eq:WassersteinDistance} the Wasserstein Distance with marginal d.f.s
$F_1, F_2$ can be computed by solving the optimal transport problem and we arrive
at the formulation
\begin{equation} \label{eq:WassersteinOptimalTransport}
 W(dF_1, dF_2) = \min_{F} \int_{\R^2} c(dF_1,dF_2)\ dF, \quad \text{ s.t. } F \text{
respects the conditions }\eqref{eq:HoeffdingFrechetBounds}
\end{equation}
where $dF_i$ shall denote the measure associated to the d.f. $F_i$, $i=1,2$.

Using again the functional lifting technique of~\cite{GlobalSolutionsOfVariationalModels},
the Hoeffding-Fr\'{e}chet bounds and the representation of the Wasserstein
distance~\eqref{eq:WassersteinOptimalTransport}, we arrive at the following
relaxation, 
where we replace the distribution functions $F_1$ by the distribution function of $\mu^{\phi}$, which is
$\int_{\Omega} -D_{\gamma} \phi(x,[0,\gamma]) dx$.
\begin{equation} \label{eq:relaxation3}
\begin{array}{rl}
 \min_{\phi, F} &
 \int_{\Omega} \int_0^1 - f(\gamma,x) D_{\gamma}\phi(x,\gamma) dx 
+ \lambda \int_0^1 \text{TV}( \phi(\cdot,\gamma) d\gamma 
+ \nu \int_{\R^2} c\ dF, \\
s.t. & F_{\phi}(\gamma) = \frac{1}{\abs{\Omega}} \int_{\Omega} -D_{\gamma} \phi(x,[0,\gamma]) dx,\\
 & F_{\mu^0}(\gamma) = \mu^0([0,\gamma]), \\
& F_{\phi}(x_1) + F_{\mu^0}(x_2) - 1 \le F(x_1,x_2) \le \min\{F_{\phi}(x_1),
F_{\mu^0}(x_2)\} \\
& \phi \in C''
\end{array}
\end{equation}
The minimization problem~\eqref{eq:relaxation3} is a relaxation of~\eqref{eq:energy}. Just set 
$$\phi(x,\gamma) = \left\{ 
\begin{array}{rl} 1,& u(x) < \gamma \\ 
                  0,& u(x) \ge \gamma \\\end{array}
\right.$$
 and let $F$ be the d.f. of the optimal transport measure with marginals
$\mu^u$ and $\mu^0$.

\begin{remark}
It is interesting to know, when relaxation~\eqref{eq:relaxation3} is exact. By
the coarea formula~\cite{Ziemer-89} we know that 
\begin{equation}
\begin{array}{rl}
&  \int_{\Omega} \int_0^1  - f(\gamma,x) D_{\gamma} \phi(x,\gamma) dx   + \lambda \int_0^1 \text{TV}( \phi(\cdot,\gamma) ) d\gamma\\
= &
\int_0^1 \int_{\Omega} f(u_{\alpha}(x),x) dx d\alpha + \lambda \int_0^1 \text{TV}(u_{\alpha}) d\alpha\,,
\end{array}
\end{equation}
where $u_{\alpha}$ corresponds to the thresholded function $\phi_{\alpha} = \mathbbmss{1}_{\{\phi >
\alpha\}} \in C'$ via relation~\eqref{eq:FunctionalLiftingRelation}. However such a
formula does not generally hold for the optimal transport: 
Let  $\phi_{\alpha} = \mathbbmss{1}_{\{\phi > \alpha\}}$
and let $F_{\alpha}$ be the d.f. of the optimal coupling with marginal d.f.s
$F_{\phi_{\alpha}}$ and $F_{\mu^0}$. Then
\begin{equation}
 F = \int_0^1 F_{\alpha}\ d\alpha
\end{equation}
has marginal d.f.s $\int_0^1 F_{\phi_{\alpha}} d\alpha$ and $F_{\mu^0}$, but
it may not be optimal.

A simple example for inexactness can be constructed as follows: Let the data
term be $f \equiv 0$ and let $\mu^0 = \frac{1}{2}(\delta_0 + \delta_1)$ and let the cost for
the Wasserstein distance be $c(\gamma_1,\gamma_2) = \lambda \abs{\gamma_1 - \gamma_2}$. Every constant
function with $u(x) = \const \in [0,1]$ will be a minimizer if $\lambda$ is
small and $\nu$ is big enough. The objective value will be $\frac{\lambda}{2}$. But relaxation~\eqref{eq:relaxation3} is inexact in this situation: Choose $\phi(x,\gamma) = \frac{1}{2} \quad \forall \gamma \in (0,1)$ and the relaxed
objective value will be $0$.
\end{remark}

\begin{remark}
The above remark was concerned with an example, where a convex combination of optimal solutions to the non-relaxed problem is a unique solution of the relaxed problem with lower objective value.

By contrast, in Section~\ref{sec:Experiments} two different academical examples are shown, which illustrate the behaviour of our relaxation~\eqref{eq:relaxation3} in situations when the non-relaxed solution is unique, see Figures~\ref{fig:ExactExample} and~\ref{fig:InexactExample}. 
Then exactness may hold or not, depending on the geometry of level sets of solutions. 
No easy characterization seems to be available for the exactness of model~\eqref{eq:relaxation3}.
\end{remark}

\section{Relationship between the two Relaxations}
\label{sec:RelationshipOfRelaxations}

Both relaxations from Sections~\ref{sec:SaddlePointFormulation} and~\ref{sec:HoeffdingFrechetBoundsFormulation} seem to be plausible but seemingly
different relaxations. Their different nature reveals itself also in the
conditions for which exactness was established. While the condition in
Proposition~\ref{prop:SaddlePoint} depends on the gap introduced by interchanging the
minimum and maximum operation, relaxation~\eqref{eq:relaxation3} is exact if a
coarea formula holds for the optimal solution. It turns out, however, that both 
equations are equivalent, hence both optimality conditions derived
in Sections~\ref{sec:SaddlePointFormulation} and~\ref{sec:HoeffdingFrechetBoundsFormulation} can be used to ensure exactness
of a solution to either one of the relaxed minimization problems.
\begin{theorem} \label{thm:relaxationEquivalence}
 The optimal values of the two relaxations
\eqref{eq:minmaxrelax2} and~\eqref{eq:relaxation3} are equal. 
\end{theorem}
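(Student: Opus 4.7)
The plan is to show that both relaxations reduce, after evaluating their respective inner optimization, to the same functional of $\phi$ alone, namely
\begin{equation*}
\min_{\phi \in C''}\Bigl\{-\!\int_\Omega\!\int_0^1 f(\gamma,x)\,D_\gamma\phi(x,\gamma)\,dx\,d\gamma+\lambda\int_0^1\text{TV}(\phi(\cdot,\gamma))\,d\gamma+\nu\,W(\mu^\phi,\mu^0)\Bigr\}.
\end{equation*}
Since the fidelity and TV contributions are literally identical in \eqref{eq:functionallifting} and in \eqref{eq:relaxation3}, the entire argument reduces to identifying, for every fixed $\phi\in C''$, the inner problem in each of the two relaxations with $W(\mu^\phi,\mu^0)$.

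First I would verify that whenever $\phi\in C''$, the formula $\mu^\phi(A)=\frac{1}{|\Omega|}\int_\Omega -D_\gamma\phi(x,A)\,dx$ defines a Borel probability measure on $[0,1]$; this follows from $D_\gamma\phi\le 0$ together with the boundary conditions $\phi(\cdot,(-\infty,0])\equiv 1$ and $\phi(\cdot,[1,\infty))\equiv 0$, which imply that $-D_\gamma\phi(x,\cdot)$ is a probability measure for a.e.\ $x\in\Omega$ and hence that the spatial average is again a probability measure. Once this is in place, for any fixed such $\phi$ the inner supremum of $E'(\phi,\psi,\psi')$ is exactly the right-hand side of \eqref{eq:DualKantorovich} with $\mu^u$ replaced by $\mu^\phi$, so by the Kantorovich duality already invoked in Section~\ref{sec:WassersteinDistance} (Theorem~5.10 in Villani) it equals $W(\mu^\phi,\mu^0)$.

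Next I would treat the Hoeffding--Fr\'echet side. For fixed $\phi$, the constraints imposed on $F$ in \eqref{eq:relaxation3} are exactly those appearing in \eqref{eq:HoeffdingFrechetBounds} with $F_1=F_\phi$ and $F_2=F_{\mu^0}$, hence by the cited Hoeffding--Fr\'echet theorem the feasible set consists precisely of bivariate distribution functions on $\R^2$ whose marginals are $F_\phi$ and $F_{\mu^0}$. Through the classical correspondence between cumulative d.f.s on $\R^2$ and Borel probability measures on $[0,1]^2$, this feasible set is in bijection with the space of transport plans $\Pi(\mu^\phi,\mu^0)$ appearing in \eqref{eq:WassersteinDistance}, and the Lebesgue--Stieltjes integral $\int c\,dF$ agrees with $\int c\,d\pi$ for the corresponding $\pi$. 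Therefore the inner minimum over $F$ also equals $W(\mu^\phi,\mu^0)$, and existence of a minimizer is guaranteed by Theorem~4.1 of Villani under the standing hypotheses on $c$.

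Combining the two identities, both relaxations coincide with the displayed functional above, which proves the theorem. The main obstacle is making the Hoeffding--Fr\'echet-to-transport-plan correspondence fully rigorous in the present setting: one has to check that an admissible $F$ in \eqref{eq:relaxation3} inherits right-continuity, monotonicity and the correct limits at infinity from its marginal constraints, so that the Lebesgue--Stieltjes measure $dF$ really is a probability measure with the prescribed marginals $\mu^\phi$ and $\mu^0$. Apart from this careful translation between cumulative and measure-theoretic formulations, the argument is pure bookkeeping between the primal and dual formulations of the Wasserstein distance.
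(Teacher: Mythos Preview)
Your argument is correct. You reduce both relaxations to the common objective
\[
\min_{\phi\in C''}\Bigl\{G(\phi)+\nu\,W(\mu^\phi,\mu^0)\Bigr\}
\]
by evaluating the inner optimization in each: Kantorovich duality (Villani, Thm.~5.10) handles the $\sup$ over $(\psi,\psi')$ in \eqref{eq:minmaxrelax2}, and the Hoeffding--Fr\'echet characterization together with the d.f./measure correspondence handles the $\min$ over $F$ in \eqref{eq:relaxation3}. This is exactly right, and the caveat you flag about ensuring that an admissible $F$ really is a bivariate d.f.\ is the only point that requires care.

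The paper arrives at the same identity but packages the saddle-point side differently: rather than invoking Kantorovich duality directly for fixed $\phi$, it casts \eqref{eq:minmaxrelax2} in the abstract form $\min_x\max_y\,\langle Kx,y\rangle+G(x)-H^*(y)$ with $H^*(\psi,\psi')=\nu\int\psi'\,d\mu^0+\chi_D(\psi,\psi')$, uses the Fenchel biconjugate identity $H=(H^*)^*$ to collapse the inner $\sup$, and only then identifies $H(\nu\mu^\phi,0)$ with $\nu W(\mu^\phi,\mu^0)$ via Kantorovich duality. On the Hoeffding--Fr\'echet side the paper is terser than you: it simply cites \eqref{eq:WassersteinOptimalTransport}, which already records that minimizing $\int c\,dF$ subject to \eqref{eq:HoeffdingFrechetBounds} computes $W(dF_1,dF_2)$. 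So the two proofs use the same ingredients; yours is the more hands-on route, avoiding the detour through the general convex-analysis template, while the paper's framing makes the convex structure (and hence the role of lower semicontinuity and properness) more explicit.
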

\begin{proof}
 It is a well known fact that
\begin{equation} \label{eq:GeneralSaddlePointProblem}
 \min_{x \in X} \max_{y \in Y} \langle Kx,y \rangle + G(x) - H^*(y)
\end{equation}
and
\begin{equation} \label{eq:GeneralPrimalProblem}
 \min_{x \in X} H(Kx) + G(x)
\end{equation}
are equivalent, where $G: X \rightarrow [0,\infty ]$ and 
$H^*: Y \rightarrow [0,\infty ]$ are proper, convex, lsc
functions, $H^*$ is the convex conjugate of  $H$ and $X$ and $Y$ are two real vector spaces, see~\cite{RockafellarConvexAnalysis} for details.

To apply the above result choose  
\begin{equation}
 G(\phi) = \int_0^1 \int_{\Omega} -D_{\gamma} \phi(x,\gamma) \cdot f(\gamma,x) dx
+ \lambda \int_0^1 \text{TV}( \phi(\cdot,\gamma) ) d\gamma + \chi_{C''}(\phi),
\end{equation}
\begin{equation}
 H^*(\psi,\psi') = \nu \int_0^1 \psi' d\mu^0 +\chi_{D}(\psi,\psi')
\end{equation}
and
\begin{equation}
\begin{array} {c}
K : BV(\Omega \times \R,[0,1]) \rightarrow \mathcal{M}([0,1])^2, \\
 K(\phi) = (\nu \mu^{\phi},0)
\end{array}
\end{equation}
where $C''$ is defined by~\eqref{eq:liftingset}, $D$ by~\eqref{eq:DualKantorovichSet}, $\chi_{C''}(\cdot)$ and $\chi_{D}(\cdot)$
denote the indicator functions of the sets~$C''$ and~$D$ respectively and
$\mathcal{M}([0,1])$ denotes the space of measures on~$[0,1]$.
\eqref{eq:GeneralSaddlePointProblem} corresponds with the above choices to the
saddle point relaxation~\eqref{eq:minmaxrelax2}.

Recall that $H = (H^*)^*$ if $H$ is convex and lsc, i.e. $H$ is the Legendre-Fenchel bidual of itself,
see~\cite{RockafellarConvexAnalysis}. Hence, for positive measures $\mu,\tilde{\mu}$,
the following holds true:
\begin{equation}
\begin{array}{rl}
 H(\mu,\tilde{\mu}) =&  \sup_{\psi,\psi'} \{\int_0^1 \psi d\mu - \int_0^1 \psi' d\tilde{\mu}
 -H^*(\psi,\psi') \} \\
 =&
 \sup_{(\psi,\psi') \in D} \{\int_0^1 \psi d\mu - \int_0^1 \psi' d\tilde{\mu} - \nu \int_0^1
\psi' d\mu^0 \} \\
=& 
\sigma_{D}( \mu,\tilde{\mu}+ \nu \mu^0) \\
\overset{(*)}{=}&
W(\mu,\tilde{\mu} + \nu \mu^0)
\end{array}
\end{equation} 
where $\sigma_{A}(x) = \sup_{a\in A} \langle a,x \rangle$ is the support
function  of the set $A$ and $\nu$ is the weight for the Wasserstein term in~\eqref{eq:E}.
To prove $(*)$, we invoke Theorem 5.10 in~\cite{VillaniOptimalTransport}, which
states that
\begin{equation}
 \sigma_D( \mu, \tilde{\mu}) = \sup_{(\psi,\psi') \in D} \int_0^1 \psi d\mu - \int_0^1
\psi' \tilde{\mu} = 
\min_{\pi \in \Pi(\mu, \tilde{\mu})} \int_{[0,1]^2} c(\gamma_1,\gamma_2) d\pi(\gamma_1,\gamma_2) = W(\mu,\tilde{\mu}),
\end{equation}
and we have infinity for measures which do not have the same mass.

Thus, the energy in~\eqref{eq:GeneralPrimalProblem} can be written as
\begin{equation}
G(\phi) + H(\nu \mu^{\phi},0)
=  G(\phi) + W(\nu \mu^{\phi}, \nu \mu^0)
=  G(\phi) + \nu W( \mu^{\phi},  \mu^0)\,.
\end{equation}
This energy is the same as in relaxation~\eqref{eq:relaxation3}, which concludes the proof.
\end{proof}

\section{Optimization} \label{sec:Numerics}
We present five experiments and the numerical method used to compute them.
\subsection{Implementation}
First, we discretize the image domain $\Omega$ to be $\{1,\ldots,n_1\} \times \{1,\ldots, n_2 \}$ and use forward differences as the gradient operator.
Second, we discretize the infinite dimensional set $C''$ and denote it by
\begin{equation}
 C''_d = \left\{ \phi:\Omega \times \left\{0,\frac{1}{k},\ldots,\frac{k-1}{k}, 1\right\} \rightarrow [0,1] \ \colon\ 
\begin{array}{l}
\phi(\cdot,1) = 0,\, \phi(\cdot,0) = 1,\\ \phi\left(\cdot,\frac{l}{k}\right) \leq \phi\left(\cdot,\frac{l-1}{k}\right)
\end{array}
\right\}\,.
\end{equation}
Hence we consider only finitely many grayvalues an image can take.
The dual Kantorovich set for the discretised problem is then
\begin{equation}
D_d = \left\{\psi, \psi' : \left\{0,\frac{1}{k},\ldots,\frac{k-1}{k}, 1\right\} \rightarrow \R \ \colon\ 
\begin{array}{l}
\psi(\gamma_1) - \psi'(\gamma_2) \le c(\gamma_1,\gamma_2)  \ \forall \gamma_1,\gamma_2 \end{array} \right\}.
\label{eq:DualKantorovichSetDiscretized}
\end{equation}

After computing a minimizer $\phi^*$ of the discretized energy, we threshold it at the value $0.5$ to obtain $\phi^{\ast} = \mathbbmss{1}_{\{\phi^* > 0.5\}}$ and then calculate $u^{\star}$ by the discrete analogue of relation~\eqref{eq:FunctionalLiftingRelation}.

For computing a minimizer of the discretized optimization problem 
\begin{equation}
\min_{\phi \in C''_d} \max_{(\psi,\psi') \in D_d} E'_d(\phi,\psi,\psi')
\end{equation}
it is expedient to use first order algorithms like~\cite{FastPrimalDualChambollePock, ADMM-2010, DouglasRachfordEckstein, raguet-gfb} as the dimensionality of the problem is high. 
To use such algorithms it is necessary to split the function $\max_{(\psi,\psi') \in D_d} E'_d(\phi,\psi,\psi')$ into a sum of terms,  whose proximity operators can be computed efficiently.
Hence consider the following equivalent minimization problem:
\begin{equation}
\min_{\phi \in C''_d,  g \in (\R^{n1 \times n2 \times k \times 2})} 
\langle \tilde{f}, \phi \rangle + \norm{g}_1 + \chi_{\{(u,v)\colon\nabla u = v\}}(\phi,g) + \chi_{C''}(\phi) + W(\mu^{\phi}, \mu^0)\,,
\end{equation}
where $\tilde{f}$ comes from the local cost factor in~\eqref{eq:functionallifting} and $\chi_A$ is the indicator function of a set $A$.
The proximity operator of a function $G$ is defined as
\begin{equation} \label{eq:SplitEnergy}
\text{prox}_{G}(x) = \argmin_{x'} \frac{1}{2} \norm{x-x'}^2 +
G(x')\,.
\end{equation}

The proximity operator of the term $\norm{g}_1$ is the soft-thresholding operator. 

$\text{prox}_{\chi_{\{(u,v)\colon\nabla u = v\}}}(\phi,g)$  can be efficiently computed with Fourier transforms, see for example~\cite{raguet-gfb}.

$\text{prox}_{\chi_{C''}}$ is the projection onto the set  of non-increasing
sequences $C''$. To compute this projection, we
employ the algorithm proposed in~\cite{Chambolle2012}, Appendix D. It is
trivially parallelisable and converges in a finite number of iterations.

Finally, the proximity operator for the Wasserstein distance can be computed
efficiently in some special cases, as discussed in the next Section~\ref{sec:WassersteinProximation}.

We can either use~\cite{raguet-gfb} to minimize~\eqref{eq:SplitEnergy} directly, 
which is equivalent to using the Douglas-Rachford method~\cite{DouglasRachfordEckstein} on a suitably defined product space and absorbing the linear term in the functions in~\eqref{eq:SplitEnergy}.

\subsection{Wasserstein Proximation for $c(\gamma_1,\gamma_2) = \abs{\gamma_1-\gamma_2}$ by soft-thresholding}\label{sec:WassersteinProximation}
In general, computing the proximity operator for the Wasserstein distance can be
expensive and requires solving a quadratic program. However, for the real line
and convex costs, we can compute the proximity operator more efficiently.
One algorithm for the cost function $c(\gamma_1,\gamma_2) = \abs{\gamma_1-\gamma_2}$ is presented below.

The proximation for the weighted Wasserstein distance is
\begin{equation} \label{eq:WassersteinProximation}
\argmin_{\phi} \frac{1}{2} \norm{\phi^0 - \phi}^2_2 + \lambda W(\mu^{\phi},
\mu^0).
\end{equation}
For the special case we consider here, there is a simple expression for the
Wasserstein distance:
\begin{proposition}[{\cite{RachevRueschendorfMassTransportationI}}] \label{prop:WassersteinDistanceRealLine}
For two measures $\mu^1, \mu^2$ on the real line and $c(\gamma_1,\gamma_2) = \abs{\gamma_1-\gamma_2}$, the
Wasserstein distance is
\begin{equation}
 W(\mu^1,\mu^2) = \int_{\R} \abs{ F_{\mu^1}(\gamma) - F_{\mu^2}(\gamma)}\ d\gamma
\end{equation}
\end{proposition}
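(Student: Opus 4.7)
My plan is to obtain both inequalities $W \ge \int_\R |F_{\mu^1}-F_{\mu^2}|\,d\gamma$ and $W \le \int_\R |F_{\mu^1}-F_{\mu^2}|\,d\gamma$ separately, using the same layer-cake identity
\begin{equation}
|\gamma_1-\gamma_2| = \int_\R \bigl|\mathbbmss{1}_{\{\gamma_1 \le t\}} - \mathbbmss{1}_{\{\gamma_2 \le t\}}\bigr|\,dt
\end{equation}
as the workhorse. This reduces the question to comparing indicator functions integrated against transport plans.

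For the lower bound, I would fix any transport plan $\pi \in \Pi(\mu^1,\mu^2)$, substitute the layer-cake identity into the transport cost, and swap the order of integration by Fubini:
\begin{equation}
\int_{\R^2} |\gamma_1-\gamma_2|\,d\pi(\gamma_1,\gamma_2) = \int_\R \int_{\R^2} \bigl|\mathbbmss{1}_{\{\gamma_1 \le t\}} - \mathbbmss{1}_{\{\gamma_2 \le t\}}\bigr|\,d\pi(\gamma_1,\gamma_2)\,dt.
\end{equation}
The triangle inequality applied pointwise in $t$ to the inner integral yields
\begin{equation}
\int_{\R^2} \bigl|\mathbbmss{1}_{\{\gamma_1 \le t\}} - \mathbbmss{1}_{\{\gamma_2 \le t\}}\bigr|\,d\pi \ge \Bigl|\int_{\R^2} \bigl(\mathbbmss{1}_{\{\gamma_1 \le t\}} - \mathbbmss{1}_{\{\gamma_2 \le t\}}\bigr)\,d\pi\Bigr| = |F_{\mu^1}(t)-F_{\mu^2}(t)|,
\end{equation}
where the last equality uses the marginal conditions on $\pi$. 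Infimizing over $\pi$ gives $W(\mu^1,\mu^2) \ge \int_\R |F_{\mu^1}-F_{\mu^2}|\,dt$.

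For the matching upper bound, I would exhibit a concrete coupling that saturates the inequality: the comonotone coupling given by the quantile functions. Letting $F_i^{-1}(u) = \inf\{t : F_{\mu^i}(t) \ge u\}$, set $\pi^*$ to be the pushforward of Lebesgue measure on $(0,1)$ by $u \mapsto (F_1^{-1}(u),F_2^{-1}(u))$. This $\pi^*$ lies in $\Pi(\mu^1,\mu^2)$, and
\begin{equation}
\int_{\R^2} |\gamma_1-\gamma_2|\,d\pi^* = \int_0^1 |F_1^{-1}(u)-F_2^{-1}(u)|\,du.
\end{equation}
Applying the layer-cake identity again together with the well-known equivalence $F_i^{-1}(u) \le t \Leftrightarrow u \le F_{\mu^i}(t)$ turns this into $\int_\R |F_{\mu^1}(t)-F_{\mu^2}(t)|\,dt$ by Fubini, finishing the proof.

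The main obstacle is the final identification $\int_0^1 |F_1^{-1}(u)-F_2^{-1}(u)|\,du = \int_\R |F_{\mu^1}(t)-F_{\mu^2}(t)|\,dt$, since the two quantile functions need not be ordered the same way on all of $(0,1)$, so one cannot just drop the absolute values. The cleanest way to handle this is to apply the layer-cake representation to $|F_1^{-1}(u)-F_2^{-1}(u)|$ in the variable $t$ and use the characterization of the generalized inverse to pass between $\{u : F_i^{-1}(u) \le t\}$ and $\{u : u \le F_{\mu^i}(t)\}$ up to a Lebesgue-null set; the swap of sign between the two halves $\{F_{\mu^1} \ge F_{\mu^2}\}$ and $\{F_{\mu^1} < F_{\mu^2}\}$ then matches automatically.
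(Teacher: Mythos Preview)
Your argument is correct. The paper, however, does not prove this proposition at all: it is simply quoted from Rachev and R\"uschendorf and used as a known fact, with no proof given. Your route---the layer-cake identity $|\gamma_1-\gamma_2|=\int_\R|\mathbbmss{1}_{\{\gamma_1\le t\}}-\mathbbmss{1}_{\{\gamma_2\le t\}}|\,dt$, the $|\!\int\!|\le\int|\cdot|$ lower bound via the marginal constraints, and the comonotone quantile coupling for the matching upper bound---is the standard textbook proof and is essentially what one finds in the cited reference. The final identification $\int_0^1|F_1^{-1}-F_2^{-1}|\,du=\int_\R|F_{\mu^1}-F_{\mu^2}|\,dt$ that you flag as the main obstacle is handled exactly as you outline, using the Galois-connection property $F_i^{-1}(u)\le t\Leftrightarrow u\le F_{\mu^i}(t)$ of the generalized inverse; no further subtlety arises.
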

% Let the space of 1-Lipschitz functions on the real line be denoted by
% \begin{equation}
%  L = \{ \psi: \R \rightarrow \R \ \colon \ \abs{\psi(x) - \psi(y)}
% \leq\abs{x-y} \  \forall x,y \in \R\}
% \end{equation}
% 
% The Kantorovich dual of the Wasserstein distance~\eqref{eq:DualKantorovich} on
% the real line with cost function $c(x,y) = \abs{x-y}$ for two histograms $\mu^1$
% and $\mu^2$ with d.f. $F_{\mu^1}$ and $F_{\mu^2}$ reads, by Remark 5.4. in
%~\cite{VillaniOptimalTransport}, as follows:
% \begin{equation} \begin{array}{rl}
%  & \max_{\{\psi \in L\}} \int_R \psi \ d \mu^1 - \int_R \psi
% \ d
% \mu^2 \\
% 
% =& \max_{\{\psi \in L\}} \int_R \psi(x) D_x F_{\mu^1}(x)\
%  - \int_R \psi(x) D_x F_{\mu^2}(x)\  \\
% 
% =& \max_{\{\psi \in L\}} \int_R \psi(x) D_x (F_{\mu^1}(x)\
%  - F_{\mu^2}(x))  \\
% 
% =& \max_{\{\psi \in L\}} \int_R D_x \psi(x) (F_{\mu^1}(x) - 
% F_{\mu^2}(x))\ dx \\
% 
% =& \max_{\{ \abs{\rho} \leq 1 \}} \int_R \rho(x)
% (F_{\mu^1}(x) - F_{\mu^2}(x)) dx \\
% 
% =& \int_R \abs{ F_{\mu^1}(x) - F_{\mu^2}(x)} dx \\
% \end{array}
% \end{equation}
% When applying integration by parts above, no boundary terms occur because
% $F_{\mu^1}(x) - F_{\mu^2}(x)$ vanishes asymptotically.
Due to $D_{\gamma} \phi(x,\gamma) \leq 0$ and $\phi(x,0) = 1$, we can also write
$F_{\mu^\phi}(\gamma)$ as
\begin{equation} \label{eq:CDFPhi}
 F_{\mu^\phi}(\gamma) =  \frac{1}{\abs{\Omega}} \int_{\Omega}  -D_{\gamma} \phi(x,[0,\gamma])\ dx =
\frac{1}{\abs{\Omega}} \int_{\Omega} 1 - \phi(x,\gamma) dx\,.
\end{equation}

Next we show how to solve in closed form the proximity operator for the
Wasserstein distance in the present case.
\begin{proposition}
\label{prop:WassersteinProximityOperator}
Given $\phi^0$, $\lambda >0$, the optimal $\tilde{\phi}$ for the proximity
operator 
 \begin{equation}
  \tilde{\phi} = \argmin_{\phi}  \frac{1}{2} \norm{\phi - \phi^0}_2^2 + \lambda
W(F_{\mu^\phi}, \mu^0)
 \end{equation}
is determined by
\begin{equation}
 \tilde{\phi}(x,\gamma) = \phi(x,\gamma) + c_{\gamma}\,,
\end{equation}
where 
\begin{equation}
c_{\gamma} = \shrink\left(-\frac{1}{\abs{\Omega}} \int_{\Omega}
\phi^0(x,\gamma) dx -
F_{\mu^0}(\gamma) + 1 , \frac{\lambda}{\abs{\Omega}} \right) + 
\frac{1}{\abs{\Omega}} \int_{\Omega}
\phi^0(x,\gamma) dx + F_{\mu^0}(\gamma) - 1
\end{equation}
and shrink denotes the soft-thresholding operator defined componentwise by
\begin{equation}
 \shrink(a,\lambda)^i = (\abs{a^i} - \lambda)_+ \cdot \sign(a^i)
\end{equation}
for $a \in \R^n$, $\lambda > 0$.
\end{proposition}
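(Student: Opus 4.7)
The plan is to exploit two special structural features of the problem: first, Proposition~\ref{prop:WassersteinDistanceRealLine} collapses the Wasserstein cost to the $L^1$-distance between c.d.f.s, and via~\eqref{eq:CDFPhi} this c.d.f. depends on $\phi(\cdot,\gamma)$ only through its spatial mean $m(\gamma) := \frac{1}{|\Omega|}\int_\Omega \phi(x,\gamma)\,dx$; second, the $L^2$ fidelity term is fully decoupled in $\gamma$. Together these make the problem separable in $\gamma$ and reducible in $x$ to a one-dimensional scalar minimization.

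Concretely, first I would rewrite the objective as
\begin{equation*}
\frac{1}{2}\int_0^1\!\!\int_{\Omega}\bigl(\phi(x,\gamma)-\phi^{0}(x,\gamma)\bigr)^{2}dx\,d\gamma
+\lambda\int_0^1\bigl|\,1-m(\gamma)-F_{\mu^{0}}(\gamma)\,\bigr|\,d\gamma,
\end{equation*}
using Proposition~\ref{prop:WassersteinDistanceRealLine} and~\eqref{eq:CDFPhi}. Since the two integrands share the same $\gamma$, the problem splits into an independent minimization for each $\gamma\in[0,1]$ over the single slice $\phi(\cdot,\gamma)\in L^{2}(\Omega)$.

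Next, for fixed $\gamma$, write $\phi(\cdot,\gamma)=\phi^{0}(\cdot,\gamma)+h$ with $h\in L^{2}(\Omega)$, and let $\bar h=\frac{1}{|\Omega|}\int_\Omega h$. The Wasserstein penalty depends on $h$ only through $\bar h$, while $\tfrac12\int h^{2}\ge \tfrac{|\Omega|}{2}\bar h^{2}$ by the Cauchy--Schwarz (or Jensen) inequality, with equality iff $h$ is constant. Hence the unique minimizer has the form $h\equiv c_\gamma$ for some scalar $c_\gamma$, proving $\tilde\phi(x,\gamma)=\phi^{0}(x,\gamma)+c_\gamma$.

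Finally, substituting $h\equiv c_\gamma$ reduces the problem to the scalar convex minimization
\begin{equation*}
c_\gamma=\argmin_{c\in\R}\;\frac{|\Omega|}{2}\,c^{2}+\lambda\,\bigl|\,b-c\,\bigr|,
\qquad b:=1-\tfrac{1}{|\Omega|}\textstyle\int_\Omega\phi^{0}(x,\gamma)\,dx-F_{\mu^{0}}(\gamma),
\end{equation*}
which is (up to a shift) the proximity operator of $\lambda|\cdot|$ and therefore solved explicitly by soft-thresholding with threshold $\lambda/|\Omega|$. Unwinding the shift gives the stated closed form. The only nontrivial step is the perturbation argument in the previous paragraph; everything else is a direct calculation. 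Verifying the precise sign and shift conventions against the formula in the statement is the last thing to check, but is mechanical once the soft-thresholding identification has been made.
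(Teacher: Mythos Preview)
Your proposal is correct and follows essentially the same route as the paper's own proof: rewrite the Wasserstein term via Proposition~\ref{prop:WassersteinDistanceRealLine} and~\eqref{eq:CDFPhi}, observe separability in $\gamma$, reduce each slice to a constant shift $c_\gamma$, and solve the resulting scalar problem by soft-thresholding. If anything, your Jensen/Cauchy--Schwarz argument makes explicit the step the paper merely asserts (``it can be easily verified that the solution \ldots\ is $\phi^0(\cdot,\gamma)+c_\gamma$''), and you correctly write $\tilde\phi(x,\gamma)=\phi^{0}(x,\gamma)+c_\gamma$ where the statement has an apparent typo.
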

\begin{proof}

By proposition~\ref{prop:WassersteinDistanceRealLine} and the
characterisation of $F_{\mu^\phi}$ in~\eqref{eq:CDFPhi}, proximation~\eqref{eq:WassersteinProximation} reads
\begin{equation} \label{eq:WassersteinProximationRealLine}
 \argmin_{\phi} \frac{1}{2}\norm{\phi^0 - \phi}_2^2 + \lambda \int_{\R} \abs{1 -
\left(\frac{1}{\abs{\Omega}} \int_{\Omega} \phi(x,\gamma)dx\right) -
F_{\mu^0}(\gamma) }
d\gamma.
\end{equation}
Note that~\eqref{eq:WassersteinProximationRealLine} is an independent
optimization problem for each $\gamma$. Thus, for each $\gamma$ we have to solve
the problem
\begin{equation} \label{eq:WassersteinProximationRealLineDecoupled}
 \argmin_{\phi(\cdot,\gamma)} \frac{1}{2}\norm{\phi^0(\cdot,\gamma) -
\phi(\cdot,\gamma) }_2^2 + \lambda
\abs{1 - \left(\frac{1}{\abs{\Omega}} \int_{\Omega} \phi(x,\gamma) dx\right) -
F_{\mu^0}(\gamma)} .
\end{equation}
It can be easily verified that the solution to problem~\eqref{eq:WassersteinProximationRealLineDecoupled} is
$\phi^0(\cdot,\gamma) + c_\gamma$, where $c_\gamma \in \R$
and 
\begin{equation}
 c_\gamma \in \argmin_{c \in \R} \frac{1}{2} \abs{\Omega} c^2 + \lambda \abs{
\frac{1}{\abs{\Omega}} \int_{\Omega} \phi^0(x,\gamma) dx + c + F_{\mu^0}(\gamma)
- 1}
\end{equation}
and hence 
\begin{equation}
c_\gamma = \shrink\left(-\frac{1}{\abs{\Omega}} \int_{\Omega} \phi^0(x,\gamma) -
F_{\mu^0}(\gamma) + 1, \frac{\lambda}{\abs{\Omega}}\right) +
\frac{1}{\abs{\Omega}} \int_{\Omega}
\phi^0(x,\gamma)dx + F_{\mu^0}(\gamma) - 1.
\end{equation}
\end{proof}

For the discretized problem one just needs to replace integration with summation to obtain the proximation operator.
Concluding, the cost for the Wasserstein proximal step is linear in the size of the input data. 

\begin{remark}
 We have seen in Proposition~\ref{prop:WassersteinDistanceRealLine} that 
$W_1(\mu^{\phi} , \mu^0 ) = \norm{H\phi − (1 − F\mu^0 }_ 1$, 
where $H$ is an operator corresponding to a tight frame, i.e. $H H^* = \abs{\Omega}^{−1}$, 
hence it is also possible to derive Proposition~\ref{prop:WassersteinProximityOperator} by known rules for proximity operators involving composition with tight frames and translation.
\end{remark}

\section{Numerical Experiments} \label{sec:Experiments}

We want to show experimentally
\begin{enumerate}
 \item that computational results conform to the mathematical model, 
\item that the convex relaxation is reasonable.
\end{enumerate}
Note that we do not claim to achieve the best denoising or inpainting results
and we do not wish to compete with other state-of-the-art methods here.
We point out again that the Wasserstein distance can be used together
with other variational approaches to enhance their performance, e.g. with
nonlocal total variation based denoising, see~\cite{NonlocalOperatorsGuyGilboa}.

\begin{remark}
As detailed in Section~\ref{sec:FunctionalLifting}, we lift our functional, so that it has one additional dimension, 
thereby increasing memory requirements and runtime of our algorithm. Non-convex approaches like~\cite{WassersteinRegularizationImagingRabinPeyre} do not have such computational requirements.
Still, the viability of the lifting approach we use was demonstrated in~\cite{ConvexFormulationContinuousMultiLabel} for our variational model without the Wasserstein term. Also all additional operations our algorithm requires can be done very fast on recent graphic cards, hence the computational burden is tractable. 
\end{remark}

We have generally chosen the parameters $\lambda,\nu$ by hand to obtain reasonable results, if not stated differently.

In the \textbf{first experiment} we compare total variation denoising and total
variation denoising with the Wasserstein term for incorporating prior
knowledge. The data term is $f(s,x) = (u_0(x) - s)^2$, where
$u_0$ is the noisy image in figure~\ref{fig:stripes}. 
The cost for the Wasserstein distance is $c(\gamma_1,\gamma_2) = \nu \abs{\gamma_1 - \gamma_2}, \ \nu > 0$. To
ensure a fair comparison, the parameter $\lambda$ for total variation
regularization \emph{without} the Wasserstein term was \emph{hand-tuned in all
experiments} to obtain best results. The histogram was chosen to match the
noiseless image. See Figure~\ref{fig:stripes} for the results. 

Note the trade-off one always has to make for pure total variation denoising: If
one sets the regularization parameter $\lambda$ high, the resulting grayvalue
histogram of the recovered image will be similar to the noisy input image and
generally far away from the histogram of ground truth. By choosing lower data
fidelity and higher regularization strength we may obtain a valid geometry of
the image, however then the grayvalue histogram tends to be peaked at one mode,
as total variation penalizes scattered histograms and tries to draw the modes
closer to each other, again letting the recovered grayvalue histogram being
different from the desired one.
By contrast, the Wasserstein prior in~\eqref{eq:E} guarantees a correct grayvalue histogram also with strong spatial regularization.

The \textbf{second} set of experiments illustrates where exactness of our relaxation may hold or fail, depending on the geometry of the level sets of solutions, see Figures~\ref{fig:ExactExample} and~\ref{fig:InexactExample}.
The gray area is to be inpainted with a Wasserstein prior favoring the gray area to be partly black and partly white. 
Note that both settings illustrate cases, when the global Wasserstein term is indispensable, as otherwise there would be completely no control over how much of the area to be inpainted ends up being white or black.
While our relaxation is not exact for the experiment in Figure~\ref{fig:InexactExample}, thresholding at $0.5$ still gives a reasonable result. 

\begin{figure}
\begin{minipage}[t]{0.49\textwidth}
\subfigure[hang][The gray area is to be inpainted with partly black and white, with slightly more white.]{
\includegraphics[width=0.45\textwidth]{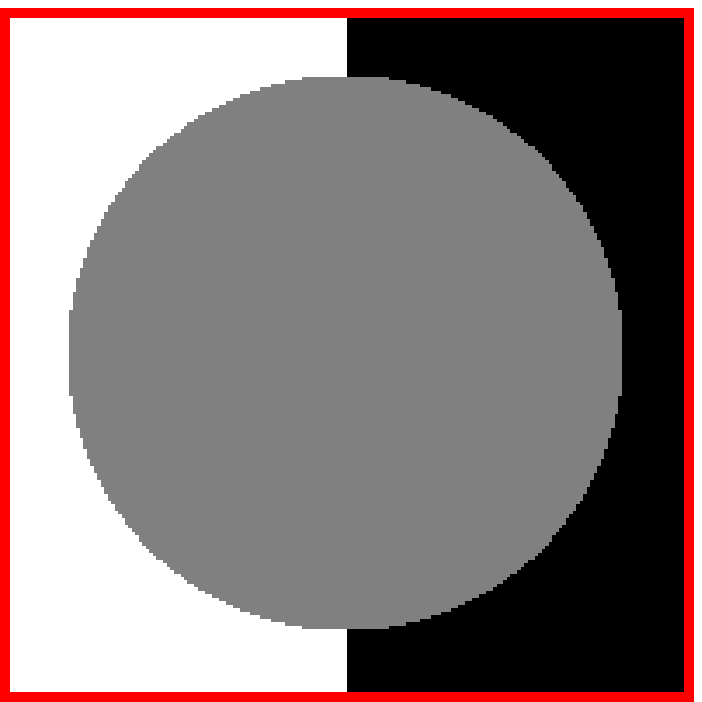}
}
\subfigure[hang][The circle in the middle has been inpainted with slightly more white as demanded by the Wasserstein term.]{
 \includegraphics[width=0.45\textwidth]{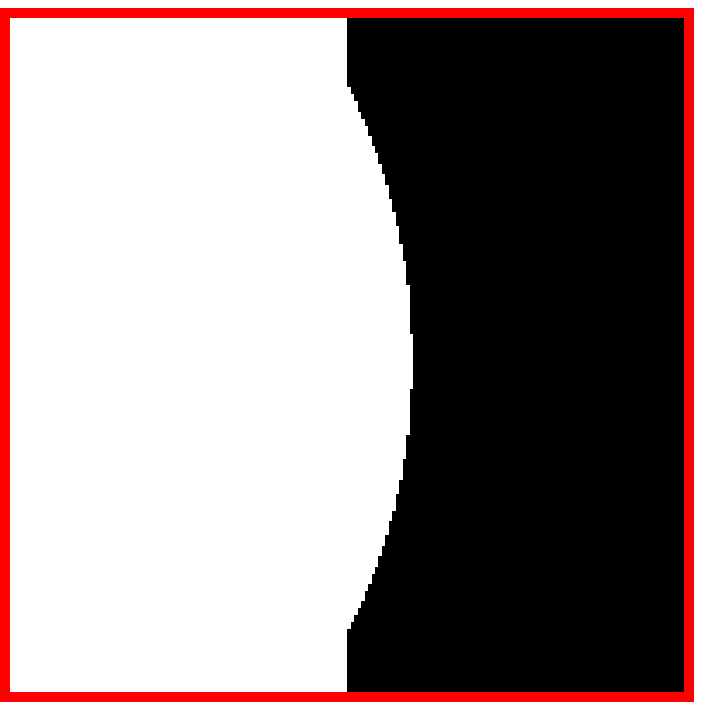}
}
\end{minipage}
\hspace{0.01\textwidth}
\begin{minipage}[t]{0.49\textwidth}
\subfigure[hang][The gray area is the area to be inpainted with a given Wasserstein prior favoring the gray area to be half black and half white.]{
\includegraphics[width=0.45\textwidth]{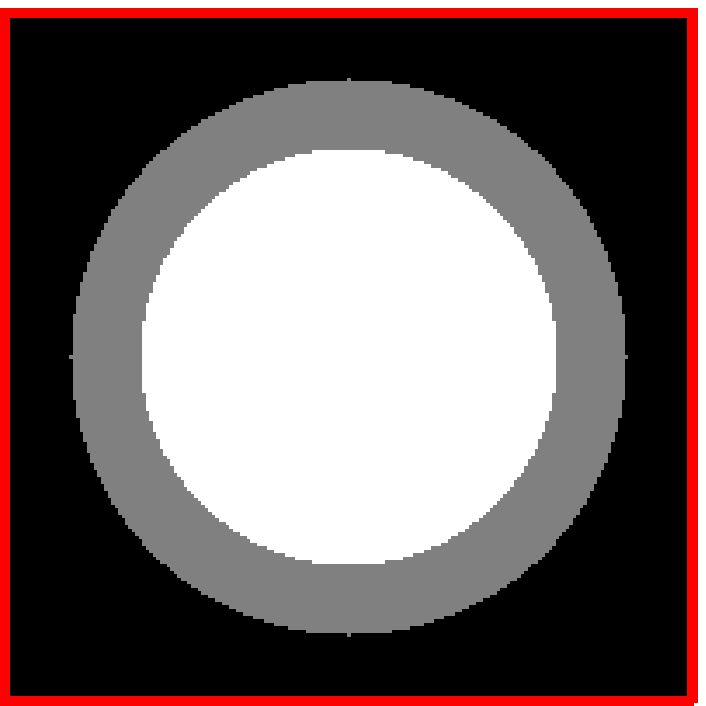}
}
\subfigure[hang][Inpainting result: we obtain a non-integral solution visualized by gray color.]{
 \includegraphics[width=0.45\textwidth]{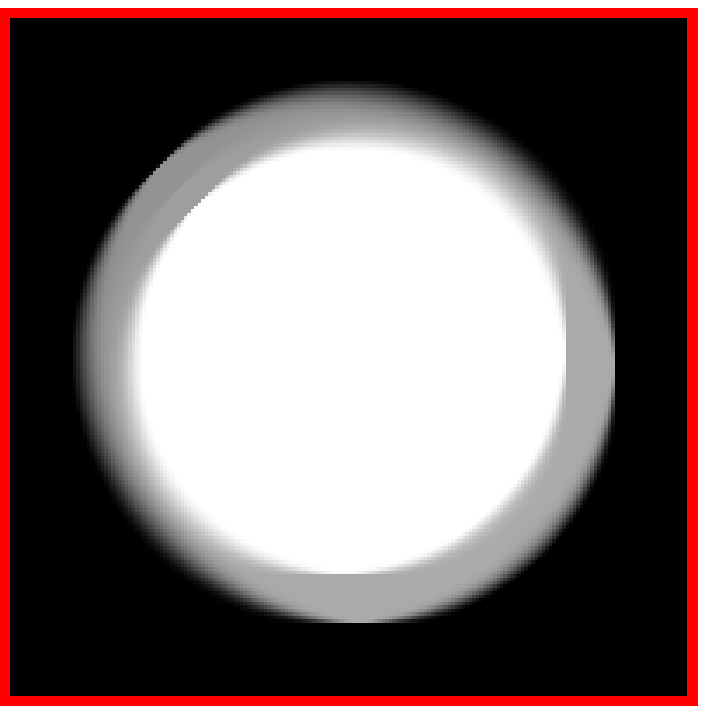}
}
\end{minipage}
\begin{minipage}[t]{0.49\textwidth}
 \caption{Example illustrating tightness of our relaxation~\eqref{eq:relaxation3}.}
\label{fig:ExactExample}
\end{minipage}
\hspace{0.01\textwidth}
\begin{minipage}[t]{0.49\textwidth}
\caption{Example illustrating failure of tightness of our relaxation~\eqref{eq:relaxation3}.}
\label{fig:InexactExample}
\end{minipage}
\end{figure}

The \textbf{third} experiment is a more serious denoising experiment. Notice
that again pure total variation denoising does not preserve the white and black
areas well, but makes them gray, while the approach with the Wasserstein
distance preserves the contrast better, see Figure~\ref{fig:tiger_denoising}.

\begin{figure}
\subfigure[Tiger denoising experiment with the original image on the left, the
image denoised with the Wasserstein term in the middle and the standard
ROF-model on the right.]{
\includegraphics[width=\textwidth]{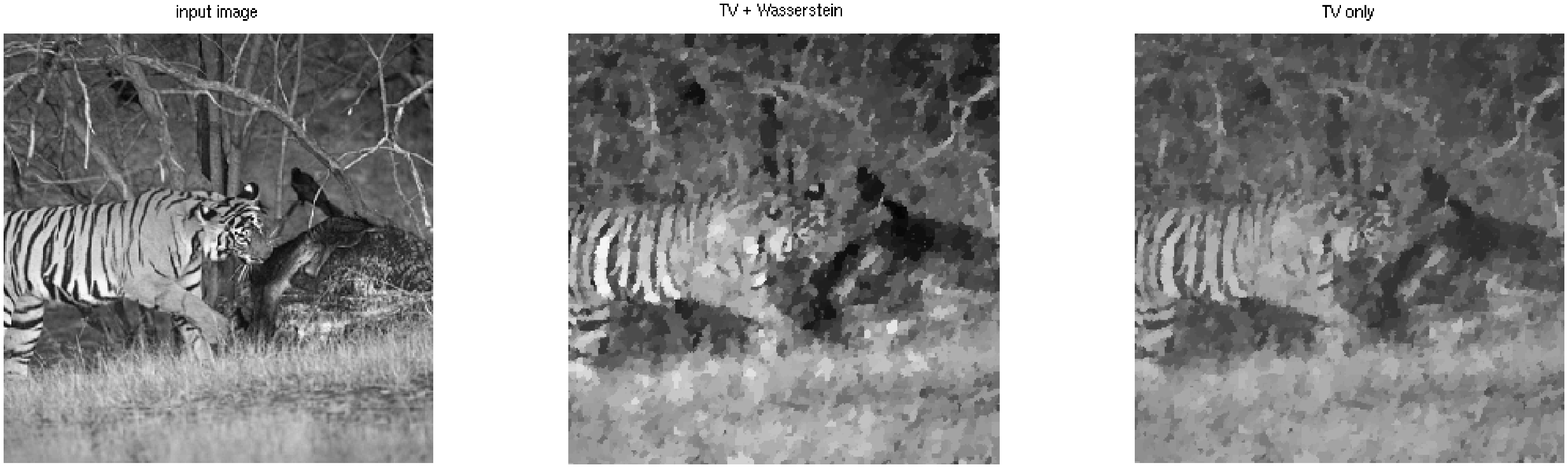}
}
\subfigure[Detailed view of the tiger denoising experiment revealing that
contrast is better pre\-served when the Wasser\-stein term is used.]{
 \includegraphics[width=\textwidth]{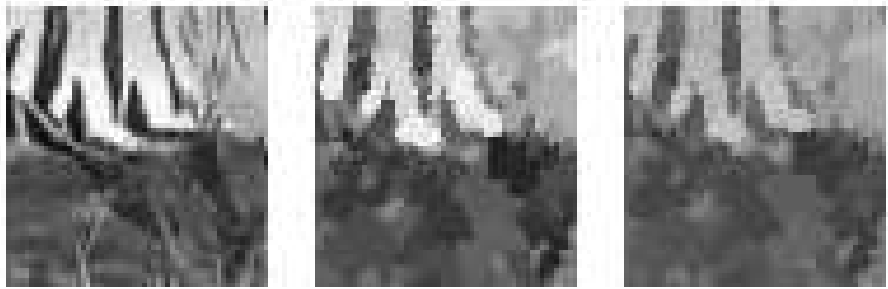}
}
\caption{Tiger denoising experiment}
 \label{fig:tiger_denoising}
\end{figure}

In the \textbf{fourth experiment} we compare image inpainting with a total
variation regularization term without prior knowledge and with prior knowledge,
see Figure~\ref{fig:triple_point} for the results. The
region where the data term is zero is enclosed in the blue rectangle.
Outside the blue rectangle we employ a quadratic data term as in the first
experiment. Total variation inpainting without the Wasserstein term does not produce
the results we expected, as the total variation term is smallest, when the gray
color fills most of the area enclosed by the blue rectangle. Heuristically, this
is so because the total variation term weighs the boundary length multiplied by
the difference between the gray value intensities, and a medium intensity
minimizes this cost. Thus the TV-term tends to avoid
interfaces, where high and low intensities meet, preferring smaller intensity
changes, which can be achieved by interfaces with gray color on one side.
Note that also the regularized image with the Wasserstein term lacks
symmetry. This is also due to the behaviour of the TV-term described above.

\begin{figure} 
 \includegraphics[width=\textwidth]{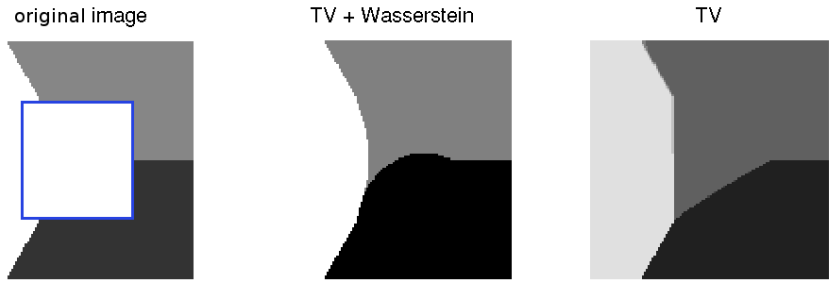}
 \caption{Inpainting experiment with the original image and the inpainting area
enclosed in a blue rectangle on the left, the inpainting result with the
Wasserstein term in the middle and the result where only the TV-regularizer is
used on the right. By enforcing the three regions to have the same size with the
Wasserstein term, we obtain a better result than with the Total Variation term
alone.}
 \label{fig:triple_point}
\end{figure}

In the \textbf{fifth} experiment we consider inpainting again. Yevgeni Khaldei,
the photographer of the iconic picture shown on the left of Figure~\ref{fig:Reichstag_big} had to remove the second watch.
Trying to inpaint the wrist with a TV-regularizer and a Wasserstein term results
in the middle picture, while only using a TV-regularizer results in the right
picture. Clearly using the Wasserstein term helps.

\begin{figure} 
 \includegraphics[width=0.35\textwidth]
{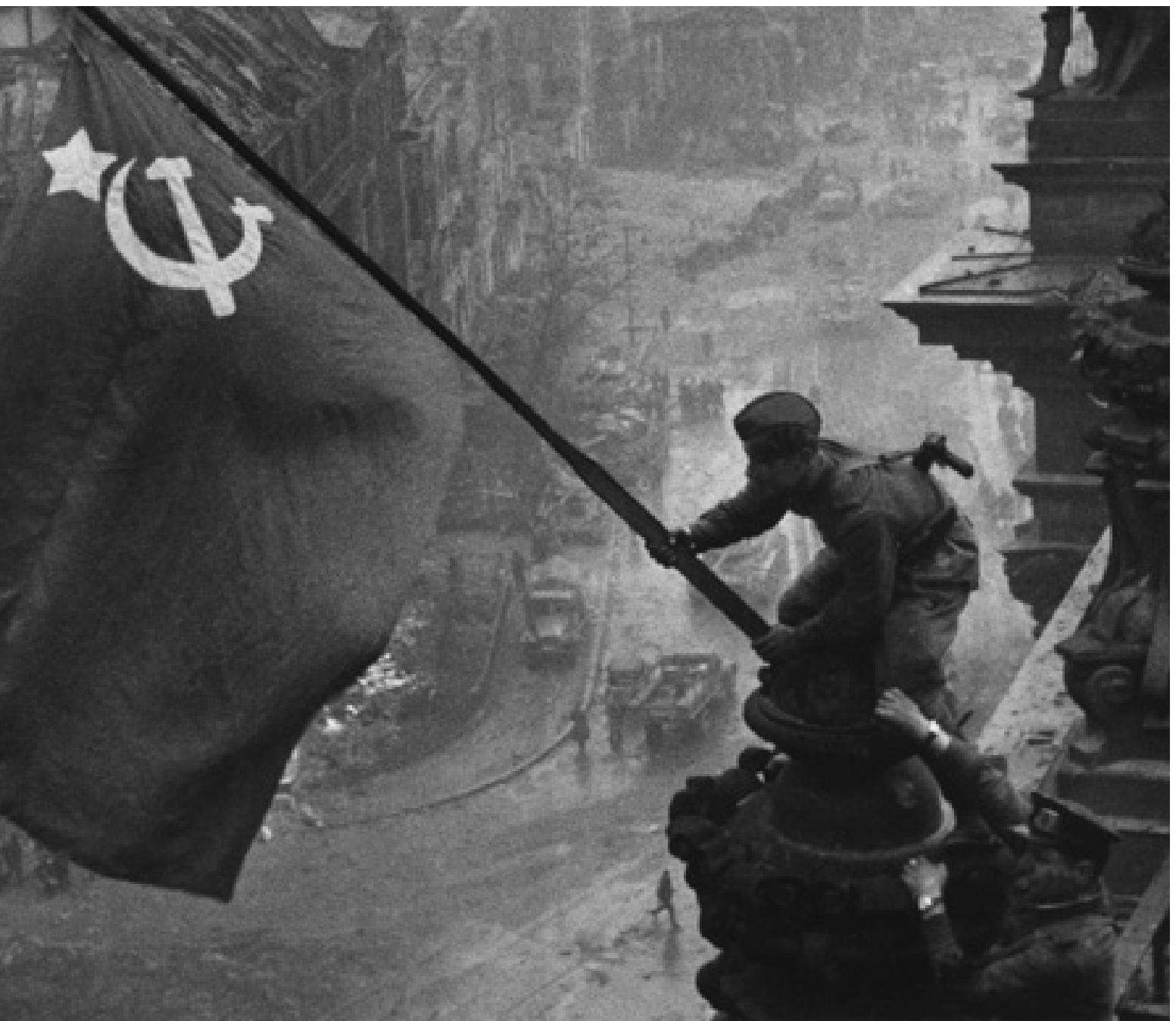}
\includegraphics[width=0.6\textwidth]{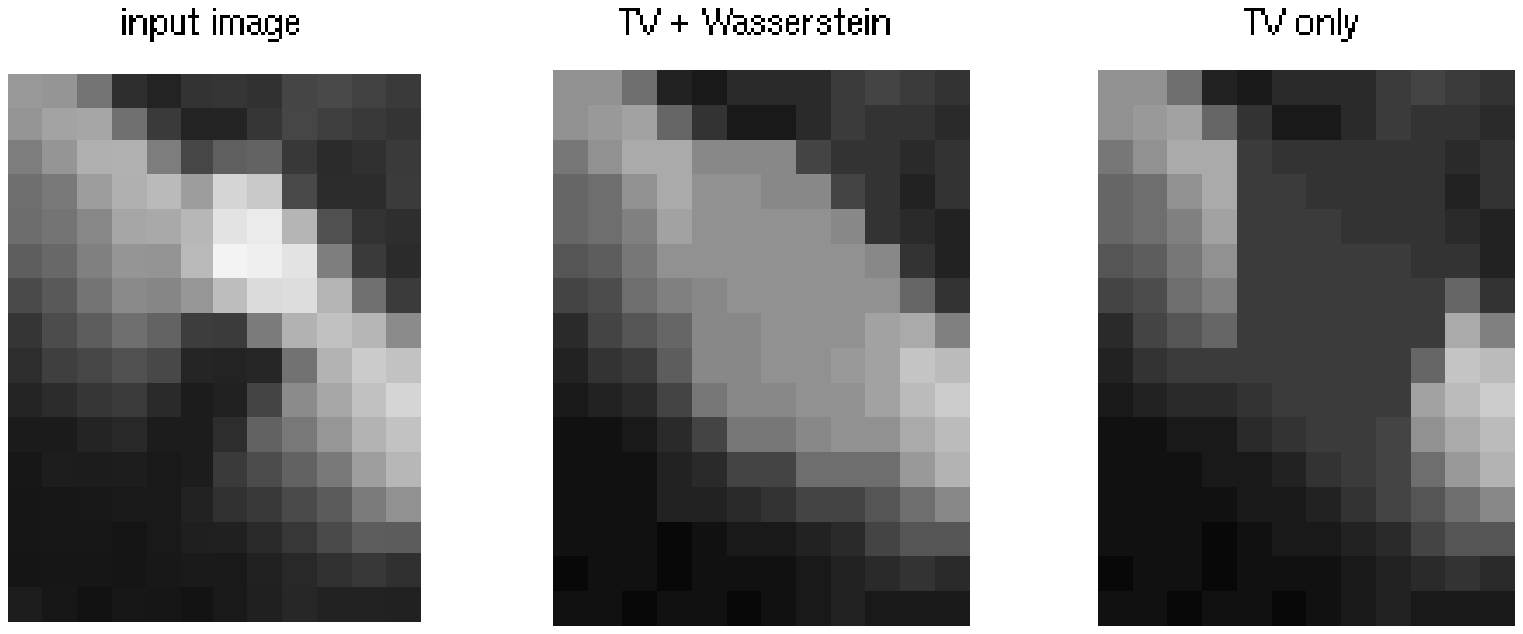}
 \caption{Here we want to inpaint the area occupied by the watch of the
soldier, see the second left image. Our approach, on the second right image
gives better results again than the approach with TV alone.}
 \label{fig:Reichstag_big}
\end{figure}
%For a close look at the wrist of the soldier see figure
%\ref{fig:Reichstag_detail}.
% \begin{figure} 
% \includegraphics[width=0.6\textwidth]{Reichstag_Uhr_Vergleich.eps}
% \caption{Close look at the second inpainting experiment}
% \label{fig:Reichstag_detail}
%\end{figure}

In the \textbf{sixth} experiment we have a different setup. The original image
is on the left of Figure~\ref{fig:plane}. The histogram
$\mu^0$ was computed from a patch of clouds, which did not include the plane.
The data term is $f(x,y) = \lambda \min(\abs{u_0(x) -
y}^2, \alpha)$, where $\alpha > 0$ is a threshold, so the data term does not
penalize great deviances from the input image too strongly. The Wasserstein term
penalizes the image of the plane whose appearance differs from the prior
statistics. The TV-regularizer is weighted weaker than in the previous examples,
because we do not want to smooth the clouds. 

\emph{Note that unlike in ordinary inpainting applications, we did not specify
the location of the plane beforehand, but the algorithm figured it out on its
own.}
The total variation term finally favors a smooth inpainting of the area occupied
by the plane. In essence we have combined two different tasks: Finding out where
the plane is and inpainting that area occupied by it. See Figure~\ref{fig:plane}
for results.

\begin{figure} 
 \includegraphics[width=\textwidth]{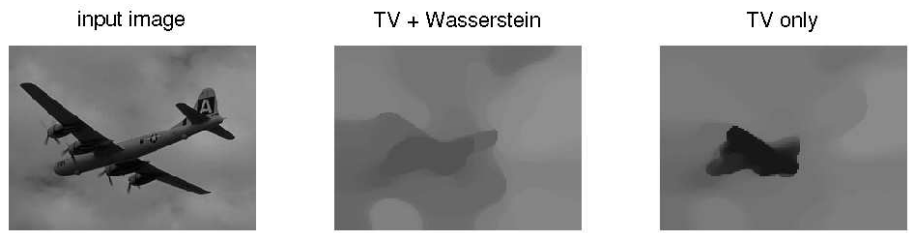}
 \caption{\emph{Unsupervised} inpainting using empirical measures as priors. Objects
not conforming to the prior statistics are removed \emph{without} labeling image
regions.}
 \label{fig:plane}
\end{figure}

\section{Conclusion and Outlook} \label{sec:ConclusionOutlook}
We have presented in this paper a novel method for variational image
regularization, which takes into account global statistical information in one
model. By solving the relaxed nonconvex problem we obtain regularizd images
which conform to some global image statistics, which sets our method apart from
standard variational methods. Moreover, the additional computational cost for the
Wasserstein term we introduced is negligible, however our relaxation is not
tight anymore as in models without the latter term. In our experiments the relaxation was seen to be tight enough for good results.

Our future work will consider extensions of the present approach to
multidimensional input data and related histograms, e.g. based on color, patches
or gradient fields. The theory developed in this paper regarding the possible
exactness of solutions does not carry over without modifications to such more
complex settings. Moreover, it is equally important to find ways related to our
present work to minimize such models efficiently.

\textbf{Acknowledgements.} We would like to thank the anonymous reviewers for their constructive criticism and Marco Esquinazi for helpful discussions.

\bibliographystyle{plain}

\begin{thebibliography}{10}

\bibitem{CalibrationAlbertiBouchitteDalMaso}
G.~Alberti, G.~Bouchitte, and G.~Dal Maso.
\newblock {The calibration method for the Mumford-Shah functional and
  free-discontinuity problems}.
\newblock {\em Journal: Calc. Var. Partial Differential Equations},
  16:299--333, 2003.

\bibitem{FunctionsOfBoundedVariation}
L.~Ambrosio, N.~Fusco, and D.~Pallara.
\newblock {\em {Functions of Bounded Variation and Free Discontinuity Problems
  (Oxford Mathematical Monographs)}}.
\newblock Oxford University Press, USA, May 2000.

\bibitem{ADMM-2010}
S.~Boyd, N.~Parikh, E.~Chu, B.~Peleato, and J.~Eckstein.
\newblock {D}istributed {O}ptimization and {S}tatistical {L}earning via the
  {A}lternating {D}irection {M}ethod of {M}ultipliers.
\newblock {\em Found.~Trends Mach.~Learning}, 3(1):1--122, 2010.

\bibitem{Chambolle2012}
A.~Chambolle, D.~Cremers, and T.~Pock.
\newblock {A} {C}onvex {A}pproach to {M}inimal {P}artitions.
\newblock {\em SIAM J.~Imag.~Sci.}, 5(4):1113--1158, 2012.

\bibitem{FastPrimalDualChambollePock}
A.~Chambolle and T.~Pock.
\newblock A first-order primal-dual algorithm for convex problems with
  applications to imaging.
\newblock {\em Journal of Mathematical Imaging and Vision}, 40(1):120--145,
  2011.

\bibitem{Chan_histogrambased}
T.~F. Chan, S.~Esedoglu, and K.~Ni.
\newblock Histogram based segmentation using {W}asserstein distances.
\newblock In Fiorella Sgallari, Almerico Murli, and Nikos Paragios, editors,
  {\em SSVM}, volume 4485 of {\em Lecture Notes in Computer Science}, pages
  697--708. Springer, 2007.

\bibitem{DouglasRachfordEckstein}
J.~Eckstein and D.~P. Bertsekas.
\newblock {On the Douglas—Rachford splitting method and the proximal point
  algorithm for maximal monotone operators}.
\newblock {\em Mathematical Programming}, 55:293--318, 1992.

\bibitem{ferradans-mixing-preprint}
S.~Ferradans, G-S. Xia, G.~Peyr{\'e}, and J-F. Aujol.
\newblock Optimal transport mixing of gaussian texture models.
\newblock In {\em Proc. SSVM'13}, 2013.

\bibitem{NonlocalOperatorsGuyGilboa}
G.~Gilboa and S.~Osher.
\newblock {Nonlocal Operators with Applications to Image Processing}.
\newblock {\em Multiscale Modeling {\&} Simulation}, 7(3):1005--1028, 2008.

\bibitem{ComparisonDiscreteContinuous}
M.~Klodt, T.~Schoenemann, K.~Kolev, M.~Schikora, and D.~Cremers.
\newblock An experimental comparison of discrete and continuous shape
  optimization methods.
\newblock In {\em Proceedings of the 10th European Conference on Computer
  Vision: Part I}, ECCV '08, pages 332--345, Berlin, Heidelberg, 2008.
  Springer-Verlag.

\bibitem{DiscreteAndContinuousModelsForPartitioning}
J.~Lellmann, B.~Lellmann, F.~Widmann, and C.~Schn\"{o}rr.
\newblock {D}iscrete and {C}ontinuous {M}odels for {P}artitioning {P}roblems.
\newblock {\em Int.~J.~Comp.~Vision}, 2013.
\newblock in press.

\bibitem{LellmannMultiClass}
J.~Lellmann and C.~Schn{\"o}rr.
\newblock {Continuous Multiclass Labeling Approaches and Algorithms}.
\newblock {\em SIAM J.~Imag.~Sci.}, 4(4):1049--1096, 2011.

\bibitem{Mathematical_CV_Handbook}
N.~Paragios, Y.~Chen, and O.~Faugeras, editors.
\newblock {\em {T}he {H}andbook of {M}athematical {M}odels in {C}omputer
  {V}ision}.
\newblock Springer, 2006.

\bibitem{peyre-wasserstein-rbac}
G.~Peyr{\'e}, J.~Fadili, and J.~Rabin.
\newblock Wasserstein active contours.
\newblock In {\em Proc. ICIP'12}, 2012.

\bibitem{GlobalSolutionsOfVariationalModels}
T.~Pock, D.~Cremers, H.~Bischof, and A.~Chambolle.
\newblock {Global Solutions of Variational Models with Convex Regularization}.
\newblock {\em SIAM J. Imaging Sciences}, 3(4):1122--1145, 2010.

\bibitem{ConvexFormulationContinuousMultiLabel}
T.~Pock, T.~Schoenemann, G.~Graber, H.Bischof, and D.~Cremers.
\newblock {A Convex Formulation of Continuous Multi-label Problems}.
\newblock In {\em ECCV (3)}, pages 792--805, 2008.

\bibitem{WassersteinRegularizationImagingRabinPeyre}
J.~Rabin and G.~Peyr{\'e}.
\newblock Wasserstein regularization of imaging problem.
\newblock In {\em ICIP}, pages 1541--1544, 2011.

\bibitem{RachevRueschendorfMassTransportationI}
S.~T. Rachev and L.~Rüschendorf.
\newblock {\em Mass Transportation Problems. Vol. I, Theory}.
\newblock Springer-Verlag, New York, 1998.

\bibitem{raguet-gfb}
H.~Raguet, J.~Fadili, and G.~Peyr{\'e}.
\newblock Generalized forward-backward splitting.
\newblock Technical report, Preprint Hal-00613637, 2011.

\bibitem{RockafellarConvexAnalysis}
R.~T. Rockafellar.
\newblock {\em Convex Analysis}.
\newblock Princeton Landmarks in Mathematics,. Princeton University Press,
  Princeton, princeton paperbacks edition, 1997.

\bibitem{EMDRubner}
Y.~Rubner, C.~Tomasi, and L.~J. Guibas.
\newblock The earth mover's distance as a metric for image retrieval.
\newblock {\em Int. J. Comput. Vision}, 40(2):99--121, November 2000.

\bibitem{Rudin_Osher_Fatemi_1992}
L.~Rudin, S.~Osher, and E.~Fatemi.
\newblock {Nonlinear total variation based noise removal algorithms}.
\newblock {\em Physica D: Nonlinear Phenomena}, 60(1-4):259--268, 1992.

\bibitem{VillaniOptimalTransport}
C.~Villani.
\newblock {\em {Optimal Transport: Old and New}}.
\newblock Grundlehren der mathematischen Wissenschaften. Springer, 1 edition,
  November 2008.

\bibitem{Ziemer-89}
W.P. Ziemer.
\newblock {\em {W}eakly {D}ifferentiable {F}unctions}.
\newblock Springer, 1989.

\end{thebibliography}

\end{document}